\renewenvironment{itemize}[1]{\begin{compactitem}#1}{\end{compactitem}}
\renewenvironment{enumerate}[1]{\begin{compactenum}#1}{\end{compactenum}}
\newtheorem{theorem}{Theorem}
\newtheorem{theorema}{Theorem}
\newtheorem{theoremb}{Theorem}
\newtheorem{theoremc}{Theorem}
\newtheorem{theoremd}{Theorem}
\newtheorem{rk}[theorema]{Remark}
\newtheorem{lem}[theoremb]{Lemma}
\newtheorem{prop}[theoremc]{Proposition}
\newtheorem{cor}[theoremd]{Corollary}
\newcommand\bib[1]{\bibitem[#1]{#1}}
\newcommand{\su}{\mathfrak{su}}
\newcommand{\comm}[1]{}
\newcommand\1{{\bf 1}}
\newcommand\ad{{\mathfrak{ad}}}
\newcommand\End{\op{End}}
\newcommand\Hom{\op{Hom}}
\renewcommand\a{\alpha}
\renewcommand\b{\beta}
\newcommand\C{{\mathbb C}}
\newcommand\e{\varepsilon}
\newcommand\g{\mathfrak{g}}
\newcommand\h{\mathfrak{h}}
\renewcommand\l{\lambda}
\newcommand\La{\Lambda}
\newcommand\m{\mathfrak{m}}
\renewcommand\to{\mathfrak{o}}
\newcommand\op[1]{\mathop{\rm #1}\nolimits}
\newcommand\p{\partial}
\renewcommand\p{\mathfrak{p}}
\newcommand\R{{\mathbb R}}
\renewcommand\r{\mathfrak{r}}
\renewcommand\sl{\mathfrak{sl}}
\newcommand\so{\mathfrak{so}}
\renewcommand\sp{\mathfrak{sp}}
\newcommand\sym{\mathfrak{sym}}
\newcommand\vp{\varphi}
\newcommand\we{\wedge}
\newcommand\li{{\mathfrak{l}}}
\begin{document}

 \title[Non-Degenerate Almost Complex Structures in 6D]{Almost Complex Structures in 6D with Non-degenerate Nijenhuis Tensors\\ and Large Symmetry Groups}
 \author{B.\,S. Kruglikov, H. Winther}
 \address{Department of Mathematics and Statistics, NT-faculty, University of Troms\o, Troms\o\ 90-37, Norway}
 \email{ boris.kruglikov@uit.no, \quad henrik.winther@uit.no.}
 \maketitle

 \begin{abstract}
For an almost complex structure $J$ in dimension $6$ with non-degene\-ra\-te Nijenhuis tensor $N_J$, the automorphism group $G=\op{Aut}(J)$ of maximal dimension is the exceptional Lie group $G_2$. In this paper we establish that the sub-maximal dimension of automorphism groups of almost complex structures with non-degene\-ra\-te $N_J$, i.e.\ the largest realizable dimension that is less than $14$, is $\dim G=10$. Next we prove that only 3 spaces realize this, and all of them are strictly nearly (pseudo-) Kähler and globally homogeneous. Moreover, we show that all examples with $\dim \op{Aut}(J)=9$ have semi-simple isotropy.
 \end{abstract}



 \section{Introduction and main results}

Consider an almost complex manifold $(M,J)$, $J^2=-\1$, of real dimension $6$ (complex dimension $3$). The Nijenhuis tensor $N_J$ is \textit{non-degenerate} when $N_J:\La^2_\C TM\rightarrow TM$ is a ($\C$-antilinear) isomorphism of real vector spaces. For brevity, we will call an almost complex structure $J$ non-degenerate or NDG when $J$ gives rise to a non-degenerate $N_J$. Some important examples of non-degenerate almost complex structures are the critical points of the Hitchin-type functionals\cite{Br,V}, and strictly nearly Kähler (SNK) structures\cite{Na}. In this paper we also consider the indefinite analog, strictly nearly pseudo-Kähler (SNPK) structures, which are Hermitian triples $(g,\omega,J)$ on $M$ with $g$ of indefinite signature, that satisfy the same condition as in the case of definite signature:
$$
\nabla^g \omega \in \Omega^3M.
$$

The non-degeneracy of an almost complex structure guarantees that the automorphism group $\op{Aut}(J)$ is a Lie group, in particular it is finite dimensional \cite{K$_1$} and at most of dimension 14\cite{K$_2$}. Moreover, this 14 is only achieved when either $G=G_2^c\subset SO(7)$, the compact form of the exceptional complex group $G_2$ and $M=\mathbb{S}^6$ with the Calabi almost complex structure $J$, or $G=G_2^*\subset SO(3,4)$, 
the split real form of the same acting on $\mathbb{S}^{2,4}$ (see \cite{Gr,Ka} for a description of 
the homogeneous structures). These two are the maximally symmetric non-degenerate almost complex structures. The \textit{sub-maximal structures} are then the maximally symmetric among those that are not $G_2$-invariant. The purpose of this paper is to determine the structures with sub-maximal symmetry.

In addition to the automorphism group $\op{Aut}(J)$, we also consider the infinitesimal symmetry algebra $\sym(J)$. Notice that $\dim \sym(J) \ge  \dim \op{Aut}(J)$.
 \begin{theorem}
Assume $J$ is not (locally) $G_2$-symmetric. Then $\dim\sym(J)\le 10$. In the case of equality, the regular orbits of the symmetry algebra $\sym(J)$ are open (local transitivity) and $J$ is equivalent near regular points to an invariant structure on one of the homogeneous spaces
 \begin{itemize}
\item $Sp(2)/SU(2)U(1)$, which is SNK;
\item $Sp(1,1)/SU(2)U(1)$, which is SNPK of signature (4,2);
\item $Sp(4,\R)/SU(1,1)U(1)$, which is SNPK of signature (4,2).
 \end{itemize}
 \end{theorem}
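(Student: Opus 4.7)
The plan is to obtain a pointwise algebraic bound on the isotropy subalgebra, combine this with a transitivity argument to get the dimension bound $10$, and then classify the equality cases by reconstructing the full Lie algebra from the isotropy data.

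Step 1 (Pointwise setup). At any point $p\in M$, the isotropy subalgebra $\h\subset\sym(J)$ embeds into the subalgebra of $\op{gl}(T_pM)$ preserving both $J_p$ and $N_{J,p}$. The stabilizer of $J_p$ is $\op{gl}(3,\C)$, of real dimension $18$, and within it I view $N_{J,p}$ as a $\C$-antilinear isomorphism $\La^2_\C T_pM\to T_pM$, equivalently as a non-degenerate element of $\La^{0,2}T_p^*\ot T_p^{1,0}$.

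Step 2 (Algebraic gap). It is already known that the maximal stabilizer of a non-degenerate $N_{J,p}$ in $\op{GL}(3,\C)$ is $SU(3)$ or $SU(2,1)$, of real dimension $8$, giving rise precisely to the two $G_2$-symmetric models. The key algebraic step is to prove a \emph{symmetry gap}: if $N_{J,p}$ is non-degenerate but its stabilizer is not conjugate to $SU(3)$ or $SU(2,1)$, then the stabilizer has real dimension at most $4$. This is carried out by reducing $N_{J,p}$ to normal forms under the $\op{GL}(3,\C)$-action (distinguishing orbits by the conjugation type of the associated $3\times3$ complex matrix, together with a discrete invariant distinguishing the real signature) and computing the stabilizers case by case.

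Step 3 (Dimension count and transitivity). Since $\dim\sym(J)=\dim(\text{orbit})+\dim\h$, the bound $\dim\h\le 4$ from Step 2 yields $\dim\sym(J)\le 6+4=10$. Equality forces $\dim\h=4$ together with local transitivity. To rule out intransitive cases, I observe that on an orbit of dimension $\le 5$ the isotropy $\h$ must additionally stabilize a nontrivial transverse subspace of $T_pM$; combined with the algebraic constraint of Step 2 this forces $\dim\h<4$, hence $\dim\sym(J)<10$ unless the orbit is open.

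Step 4 (Realization). For $\dim\sym(J)=10$ the isotropy $\h$ of dimension $4$ that stabilizes $J_p$ and a non-degenerate $N_{J,p}$ is, up to conjugation, either $\mathfrak{su}(2)\oplus\mathfrak{u}(1)$ or $\mathfrak{su}(1,1)\oplus\mathfrak{u}(1)$. Reconstructing the full symmetry algebra as $\g=\h\oplus T_pM$ using the isotropy representation, the value of $N_{J,p}$, and the Jacobi identity fixes the bracket up to finitely many parameters; the compatibility constraints select exactly three real Lie algebras, which one identifies with $\sp(2)$, $\sp(1,1)$ and $\sp(4,\R)$ and the homogeneous models listed. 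The SNK/SNPK property follows from the existence of an $\h$-invariant Hermitian form on $T_pM$ which extends to an invariant metric with $\nabla^g\oo\in\O^3M$.

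The principal obstacle is Step 2, the algebraic gap between stabilizer dimensions $8$ and $4$, which requires a complete orbit analysis of $\op{GL}(3,\C)$ acting on non-degenerate $(0,2)$-tensors valued in $(1,0)$-vectors. A secondary difficulty is Step 4, where the unexpected identification of the three ten-dimensional reconstructed algebras with real forms of $\sp(4,\C)$ must be verified by hand.
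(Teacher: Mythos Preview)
Your overall strategy matches the paper's, but Step~2 as stated contains a genuine gap. The normal-form analysis of non-degenerate $N_J$ under $\op{GL}(3,\C)$ (which is indeed how the paper's Section~2 proceeds) does \emph{not} yield a direct bound of $4$ on the stabilizer dimension. What it yields is that the stabilizer lies in $\su(3)$ or $\su(1,2)$ (via the invariant Hermitian form $h$ and holomorphic $3$-form $\zeta$ built from $N_J$), with equality exactly in the $G_2$-symmetric cases; for the exceptional normal forms where $h$ or $\zeta$ degenerates one computes the stabilizer explicitly and finds it is at most $4$-dimensional. But $\su(1,2)$ contains a $5$-dimensional parabolic subalgebra $\p$, and nothing in the orbit analysis you describe excludes $\p$ as a pointwise stabilizer of some non-degenerate $N_J$ in the non-exceptional range. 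The paper does not attempt to exclude it pointwise either: it lists $\p$ (and its $4$-dimensional solvable subalgebras $\r$) among the candidate isotropies, computes $H^1(\h,\Hom(\m,\h))$ to show $\g=\h\oplus\m$ as $\h$-modules, parametrizes all $\h$-equivariant brackets, and then finds that the Jacobi identity has \emph{no} solutions for $\h=\p$, while for $\h=\r$ the unique resulting invariant $J$ has degenerate $N_J$. Without this step (or an independent verification that no non-degenerate tensor $N_J$ is $\p$-invariant) your bound $\dim\sym(J)\le10$ is not established; in particular ``computing the stabilizers case by case'' over the normal forms is not enough, because for generic parameters the stabilizer is only pinned down as a subalgebra of $\su$.

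Your Step~4 also diverges from the paper and hides nontrivial work. You assume $\g=\h\oplus T_pM$ as an $\h$-module; the paper justifies this separately via the vanishing of $H^1(\h,\Hom(\m,\h))$ for reductive $\h$. More importantly, rather than brute-force bracket reconstruction constrained by the value of $N_{J,p}$, the paper argues structurally: with $\h=\mathfrak{u}(2)$ or $\mathfrak{u}(1,1)$ it applies the Levi decomposition to $\g$, shows that any nontrivial radical forces $N_J$ to be degenerate (using that the derived series of the radical consists of $\h$-submodules), and in the semisimple case identifies $\g$ as a real form of $B_2\simeq C_2$ by analyzing which defining representations $\h\to\so(\R^5,g)$ give the correct isotropy module $\m=V\oplus\C$. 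This is how the three models and their SN(P)K nature emerge, without ever solving for brackets from $N_{J,p}$.
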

\begin{cor}
The gap between maximal and sub-maximal symmetry dimensions of $\sym(J)$ for $\dim M =6$ is the same for non-degenerate almost complex structures as for SNK and SNPK.
\end{cor}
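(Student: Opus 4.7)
The strategy is to apply the Theorem directly, exploiting that every SNK or SNPK structure is itself a non-degenerate almost complex structure, so its symmetry dimension is automatically governed by the NDG bounds. First I would observe that the two maximally symmetric NDG models already identified in the introduction, namely the Calabi structure on $\mathbb{S}^6$ (acted on by $G_2^c$) and its split analog on $\mathbb{S}^{2,4}$ (acted on by $G_2^*$), are strictly nearly (pseudo-)Kähler with respect to the canonical invariant (pseudo-)Riemannian metric. The compact case is classical, and the indefinite case follows analogously from the $3$-symmetric space description of the quotient. Hence the maximal value of $\dim\sym(J)$ in the SNK/SNPK class in real dimension $6$ is also $14$.

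For the sub-maximal side a squeeze argument suffices. On one hand, the three homogeneous spaces produced by the Theorem in the equality case are explicitly SNK or SNPK, so the sub-maximal dimension realized within SNK/SNPK is at least $10$. On the other hand, any SNK/SNPK structure is automatically NDG, and the Theorem rules out any non-$G_2$-symmetric NDG example with $\dim\sym(J)>10$. Thus no SNK/SNPK structure can have symmetry dimension strictly between $10$ and $14$, and $10$ is also the sub-maximal value in the SNK/SNPK category.

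Combining both ends, the gap between maximal and sub-maximal symmetry dimensions equals $14-10=4$ for NDG almost complex structures as well as for SNK and SNPK structures in dimension six. The only non-formal input is the verification that the two maximal $G_2$-models are themselves SNK/SNPK; the main (and only slight) obstacle is the indefinite side of this check, but once it is noted the corollary follows immediately from the Theorem.
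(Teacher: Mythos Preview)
Your argument is correct and matches the paper's intended reasoning: the corollary is stated without a separate proof because it follows immediately from Theorem~1 together with the facts, recorded in the introduction, that the $G_2$-symmetric models on $\mathbb{S}^6$ and $\mathbb{S}^{2,4}$ are themselves SNK/SNPK and that every SNK/SNPK structure is NDG. Your squeeze argument spells out exactly this implication.
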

\begin{rk}
The topological types of the three homogeneous models from Theorem 1 are respectively $\C P^3$, $\C P^1 \times \C^2$ and a $\C$-line bundle over $\C P^1 \times \C$.
\end{rk}
We also investigate the possibility of singular orbits of the submaximal symmetry groups, with the conclusion that there are none. For simplicity we formulate the global version.
\begin{theorem}
Let $(M,J)$ be a connected non-degenerate almost complex manifold with $\dim \op{Aut}(J)=10$. Then $M$ is equal to the regular orbit of its automorphism group, and hence it is a global homogeneous space of one of three types indicated in Theorem 1.
\end{theorem}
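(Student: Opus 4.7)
The plan is to derive Theorem 2 from Theorem 1 by ruling out singular orbits of $G=\op{Aut}(J)^0$. The hypothesis $\dim\op{Aut}(J)=10$, together with $\mathfrak{aut}(J)\subset\sym(J)$ and the bound $\dim\sym(J)\le 10$ from Theorem 1, forces $\dim\sym(J)=10$, so the equality case of Theorem 1 applies. Consequently the set $M^{reg}$ of regular points is open and dense, and near each such point $(M,J)$ is locally equivalent to an invariant structure on one of the three homogeneous models. In particular $M^{reg}$ is a disjoint union of open $G$-orbits of the form $G/H_0$ with generic isotropy $H_0$ of dimension $4$ (isomorphic to $SU(2)\cdot U(1)$ or $SU(1,1)\cdot U(1)$). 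It suffices to show that $M\setminus M^{reg}=\emptyset$, for then connectedness of $M$ forces $M$ to consist of a single such orbit.

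Assume for contradiction the existence of a singular point $x$, with isotropy $H_x$ of dimension $\ge 5$. Two restrictions on $H_x$ lead to a contradiction. First, the linear isotropy representation $\rho_x:H_x\to GL(T_xM)$ is faithful: since $(J,N_J)$ with $N_J$ non-degenerate defines a $G$-structure of finite type \cite{K$_1$}, any infinitesimal automorphism whose $1$-jet vanishes at $x$ must vanish identically. Second, the image $\rho_x(H_x)$ preserves $(J_x,N_J|_x)$, and the stabilizer of this pair in $GL(T_xM)$ is an $8$-dimensional subgroup, namely $SU(3)$ in the positive-definite case and $SU(2,1)$ in the signature $(4,2)$ cases (the non-degenerate pair $(J,N_J)$ determines the induced Hermitian metric and complex volume). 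Therefore $\mathfrak{h}_x$ is a subalgebra of $\g\in\{\sp(2),\sp(1,1),\sp(4,\R)\}$ of dimension in $\{5,6,7,8\}$ admitting a faithful representation into the corresponding $8$-dimensional pointwise stabilizer ($\su(3)$ or $\su(2,1)$).

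The contradiction is then reached by direct enumeration. In the compact case $\g=\sp(2)$, all subalgebras are reductive, and a rank-dimension count (the rank of any subalgebra is at most $2$) shows that the only proper one of dimension $\ge 5$ is $\so(4)\cong\su(2)\oplus\su(2)$ (dimension $6$); this fails the embedding constraint since $\su(2)\oplus\su(2)$ is not a subalgebra of $\su(3)$. The two non-compact cases are handled analogously: candidate reductive subalgebras (the maximal compact $\so(4)$ or $\sl(2,\R)\oplus\sl(2,\R)$, the non-compact real forms such as $\so(3,1)$, etc.) are excluded by embedding into $\su(2,1)$ (whose maximal compact is the $4$-dimensional $\u(2)$), while non-reductive (in particular parabolic) subalgebras are excluded by the incompatibility of their nilpotent radicals with the nilpotent structure available in $\su(2,1)$ (whose Heisenberg nilradical has dimension $3$ but is non-abelian). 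The main obstacle is precisely this last enumeration of non-reductive subalgebras in the indefinite-signature cases, but the bounds $5\le\dim\mathfrak{h}_x\le 8$ combined with the faithful-representation requirement leave only a handful of candidates, all excluded. Hence $M\setminus M^{reg}=\emptyset$ and $M$ is a single homogeneous $G$-space of one of the three indicated types.
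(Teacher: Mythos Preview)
Your overall strategy---constrain the isotropy $\mathfrak h_x$ at a putative singular point to lie simultaneously in $\g$ and in the pointwise stabilizer of $(J,N_J)$, which by Proposition~\ref{isotropyisunitary} is contained in $\su(3)$ or $\su(2,1)$---is sound and rather different from the paper's geometric case analysis in Section~5. Since the only proper subalgebras of $\su(3)$ have dimension $\le 4$, and the only proper subalgebra of $\su(2,1)$ of dimension $\ge 5$ is the $5$-dimensional parabolic $\p$, your approach does reduce the problem to a single critical question: does any $\g\in\{\sp(2),\sp(1,1),\sp(4,\R)\}$ contain a subalgebra abstractly isomorphic to $\p\subset\su(2,1)$, in such a way that it can occur as an isotropy?

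Here is the gap. Your nilradical argument does \emph{not} exclude this. The parabolic $\p\subset\su(2,1)$ is an extension of $\mathfrak{heis}_3$ by an abelian $2$-plane $\mathfrak u(1)\oplus\R s$ (compact rotation on the $2$-dimensional graded piece, trivial on the center). But the paper's list of $5$-dimensional subalgebras of $\sp(4,\R)$ contains $(\R t\oplus\R s_1)\ltimes\mathfrak{heis}_3$ with $t$ compact, and this algebra \emph{is} abstractly isomorphic to $\p$: same nilradical, same abelian Levi acting in the same way. So ``incompatibility of nilpotent radicals'' fails exactly here, and your final sentence (``all excluded'') is unjustified for this case. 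The paper in fact singles out this case as ``the only case to admit the required complex structure on the $4$D distribution'' and disposes of it by a separate Nijenhuis-tensor computation.

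Your approach can be repaired without that computation, but you need one more representation-theoretic observation. If $\mathfrak h_x\cong\p$ embeds (via the faithful isotropy representation) into $\su(2,1)\subset\mathfrak{gl}(T_xM)$, then up to conjugacy this is the standard parabolic inclusion, and $T_xM$ is the standard $\C^3$. The tangent space to the orbit, $T_xO=\g/\mathfrak h_x$, is then a real $\p$-submodule of $\C^3$ of dimension $5$. But the compact generator of $\p_0$ acts on $\C^3$ with nonzero rotation weights on every complex line, so every real $\p$-submodule is $J$-invariant and hence even-dimensional (indeed the only $\p$-submodules are the filtration $0\subset L\subset L^\perp\subset\C^3$ of real dimensions $0,2,4,6$). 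No $5$-dimensional submodule exists, contradiction. With this step added, your line of argument goes through and is shorter than the paper's orbit-by-orbit analysis; without it, the proof is incomplete precisely at the one case that matters.
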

We also consider non-degenerate almost complex structures with sub-submaximal symmetry:\footnote[1]{The statement about the Lie algebra of symmetries is stronger than that about the Lie group, and so we give only the local version.}
\begin{theorem}\label{Thm1}
Non-degenerate almost complex structures with $\dim \sym(J)=9$ are all (locally) homogeneous spaces $M=G/H$ with semi-simple stabilizer either $SU(2)$ or $SU(1,1)$.
\end{theorem}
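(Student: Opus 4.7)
The argument has two stages: first, establish that $\sym(J)$ acts locally transitively, so that $M=G/H$ locally with $\dim H=3$; second, show that such a $3$-dimensional isotropy must be semi-simple.

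For local transitivity, at a regular point $x\in M$ the isotropy $\h_x\subset\sym(J)$ acts faithfully on $T_xM$ preserving $J_x$ and the non-degenerate $N_{J,x}$; by \cite{K$_2$} this yields an embedding $\h_x\hookrightarrow\su(3)$ or $\su(1,2)$. Since $\dim\sym(J)=9$ and $\dim M=6$, one has $\dim\h_x\ge 3$, with equality iff the orbit is open. Assume toward contradiction that $\dim\h_x=k\ge 4$: then the orbit has codimension $c=k-3\ge 1$, and $\h_x$ stabilises the tangent $W\subset T_xM$ to the orbit. The subalgebras of $\su(3)$ and $\su(1,2)$ of dimension $\ge 4$ stabilising such a proper real subspace form a short explicit list of parabolic-type algebras. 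For each, I would combine the induced $\h_x$-module structure on the transversal (which inherits a lower-dimensional non-degenerate complex datum whose symmetries embed into $\sym(J)$) with the classification underlying Theorem~1 to bound $\dim\sym(J)<9$, a contradiction. Hence the action is locally transitive and $\dim\h=3$.

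By Bianchi's classification, a $3$-dimensional real Lie algebra is either semi-simple ($\su(2)$ or $\sl(2,\R)=\su(1,1)$) or solvable (abelian $\R^3$, Heisenberg, or a member of the non-nilpotent solvable one-parameter family). The abelian $\R^3$ is excluded at once since $\su(3)$ and $\su(1,2)$ have rank $2$. Ruling out the remaining solvable types is the main obstacle. Here I would fix a splitting $\g=\h\oplus\m$ with $\m\cong T_xM$ and analyse the $\C$-antilinear map $N_J:\La^2_\C\m\to\m$ induced by the $\m$-component of $[\m,\m]$. Solvability of $\h$ produces a joint (generalised) eigenvector $v\in\m_\C$ for its action on $\m$; the Jacobi identity applied to $\h$, $v$, and $Jv$, together with the $\C$-antilinearity of $N_J$, forces the image of $N_J$ into a proper $\h$-invariant $\C$-subspace of $\m$, contradicting non-degeneracy. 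The bookkeeping of weights has to be done separately for the Heisenberg algebra and for each member of the solvable family, and the $\su(1,2)$ case, admitting more such subalgebras, is the more delicate one. Once all solvable possibilities are eliminated, $\h$ must be $\su(2)$ or $\su(1,1)$, as claimed.
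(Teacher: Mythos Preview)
Your two-stage plan matches the paper's overall structure, but both stages have issues worth flagging.

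\textbf{Local transitivity.} Your argument for ruling out $\dim\h_x\ge4$ contains a genuine gap: the claim that the transversal ``inherits a lower-dimensional non-degenerate complex datum whose symmetries embed into $\sym(J)$'' is not justified and, as stated, is not correct---the normal direction to a singular orbit carries no canonical almost complex structure or Nijenhuis-type tensor, and there is no obvious mechanism by which such a datum would bound $\dim\sym(J)$. The paper's route here is short and different: since the orbits locally foliate $M$, in foliation coordinates every symmetry vector field is tangent to leaves, so the isotropy $\h$ acts \emph{trivially} on the quotient $\m/\to$ (this is Lemma~3 in the paper). One then lists the pairs $(\h,\to)$ with $\dim\h+\dim\to\ge9$ and $\to\subsetneq\m$ a proper $\h$-submodule; the only candidates are $\h=\p$ with $\dim\to=4$ and $\h=\r$ (the one containing the grading element $s$) with $\dim\to=5$. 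In both, $s$ acts nontrivially on $\m/\to$, contradicting triviality. This triviality-of-the-normal-action observation is the idea your argument is missing.

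\textbf{Solvable 3D isotropy.} Your eigenvector/weight idea is attractive but is only a sketch, and the sentence ``the Jacobi identity applied to $\h$, $v$, and $Jv$ forces the image of $N_J$ into a proper $\h$-invariant $\C$-subspace'' hides the real work: $N_J$ is computed from the $\m$-component of the bracket $[\m,\m]$, which is not determined by the $\h$-module structure alone; one must control the full space of $\h$-equivariant brackets on $\g$ subject to Jacobi. The paper proceeds concretely rather than via abstract Bianchi types: it realises the 3D solvable isotropies as the explicit graded subalgebras $\li_0,\li_1,\li_2\subset\p\subset\su(1,2)$, computes $H^1(\h,\Hom(\m,\h))$ to classify the possible $\h$-module structures on $\g$, shows that only the split module $\g=\h\oplus\m$ admits a Lie algebra extension, parametrises the equivariant brackets $B(\h,\g)$, solves the Jacobi equations, and verifies (by direct computation) that every resulting $N_J$ is degenerate. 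If your weight-argument can be pushed through it would be more conceptual, but as written it does not yet address the freedom in the bracket on $\m$, and the promised ``bookkeeping'' is where the substance lies.
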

The classification of 6D homogeneous almost complex structures with semi-simple isotropy was performed in \cite{AKW}. That paper also contains the classi\-fication of almost Hermitian manifolds with the same isotropy, and some speci\-fications. For instance, homogeneous SNPK structures with 9-dimensional\linebreak
  symmetries and semi-simple isotropy consist of 3 classes (in contrast with 2 in the case of SNK). 
Thus we conclude the classification of homogeneous SNPK structures with automorphism group of dimension at least 9.

We also conclude from the above theorems and \cite[Theorem 8]{AKW} 
the following statement (see also Remark 5 in loc.cit.\ about the quotients).

 \begin{cor}
The only compact homogeneous almost complex 6-dimensional manifolds with non-degenerate Nijenhuis tensor
and symmetry dimension at least 9 are $\mathbb{S}^6$, $\mathbb{C}P^3$, $\mathbb{S}^1\times\mathbb{S}^5$, 
$\mathbb{S}^3\times\mathbb{S}^3$ and their finite quotients (for symmetry dimension at least 10, 
we get only $\mathbb{S}^6$, $\mathbb{C}P^3$ but without the homogeneity assumption).
 \end{cor}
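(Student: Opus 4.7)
The plan is to split the argument by the symmetry dimension $d := \dim\sym(J) \ge 9$ (or, in one part, $d := \dim\op{Aut}(J) \ge 9$) and in each range invoke the theorems established earlier in the paper together with the classification of \cite{AKW}.

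First I would dispose of the cases $d \ge 11$ using the introduction's discussion of the maximal case together with Theorem 1: any non-$G_2$-invariant NDG example has $d \le 10$, so $d \ge 11$ forces the $G_2$-model. The compact representative is the Calabi sphere $\mathbb{S}^6$ with its $G_2^c$-structure; the split-form model $\mathbb{S}^{2,4}$ is non-compact. For $d = 10$, Theorem 2 tells me that $M$ is \emph{globally} one of the three homogeneous spaces from Theorem 1, so no separate homogeneity hypothesis is needed in this range. Only $Sp(2)/SU(2)U(1)\cong \mathbb{C}P^3$ is compact, since by Remark 1 the underlying manifolds of the other two are topologically $\mathbb{C}P^1\times\mathbb{C}^2$ and a $\mathbb{C}$-line bundle over $\mathbb{C}P^1\times\mathbb{C}$, both non-compact. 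This already yields the parenthetical part of the corollary: without any homogeneity assumption, $d \ge 10$ gives precisely $\mathbb{S}^6$ and $\mathbb{C}P^3$ (and their finite quotients).

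The remaining case $d = 9$ is where the homogeneity hypothesis is actually used. By Theorem \ref{Thm1}, every such $(M,J)$ is locally homogeneous with semi-simple isotropy isomorphic to $SU(2)$ or $SU(1,1)$. Under the additional assumption of compactness and global homogeneity, I would run through the list in \cite[Theorem 8]{AKW} of 6D NDG almost complex structures with semi-simple isotropy and retain only those coset spaces $G/H$ that admit a compact realization. The outcome is $\mathbb{S}^1\times\mathbb{S}^5$ and $\mathbb{S}^3\times\mathbb{S}^3$, matching the statement. Finite quotients are then absorbed automatically by passing to discrete subgroups of the automorphism group, as recorded in Remark 5 of loc.\ cit.

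The main obstacle is essentially bookkeeping in the $d = 9$ line: going through each entry of \cite[Theorem 8]{AKW}, identifying the connected Lie pair $(G,H)$ underlying the almost complex model, and checking whether $G/H$ (or a discrete quotient) can be realized as a compact $6$-manifold. For $d \ge 10$ everything is handed to me by Theorems 1 and 2 up to inspecting three explicit cosets, so the essential work is concentrated at $d = 9$, where the number of local models is larger and global compactness must be verified case by case.
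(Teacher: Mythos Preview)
Your proposal is correct and follows essentially the same route as the paper: the corollary is presented there as an immediate consequence of Theorems 1, 2, 3 together with \cite[Theorem 8]{AKW} (and Remark 5 for the quotients), and your case split by $d\ge11$, $d=10$, $d=9$ with the use of Remark 1 for the topological types is exactly the intended unpacking of that sentence.
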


The rest of this paper constitutes a proof of the above theorems. Some computations in Maple are available as a supplement to this paper.

\textsc{Acknowledgements}: Henrik Winther is grateful to Ilka Agricola for her\linebreak 
 hospitality during his DGF-funded research stay at the University of Marburg. 
Both authors were partially supported by the Norwegian Research Council and DAAD project of Germany.

\section{Possible Isotropy Algebras}
 \begin{prop}\label{isotropyisunitary}
The isotropy algebra of the symmetries of NDG almost complex structures in 6D that has dimension $\ge3$ is either one of the
special (pseudo-) unitary algebras $\su(3)$, $\su(1,2)$, or a subalgebra of these.
 \end{prop}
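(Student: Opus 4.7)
The plan is to linearize at a point: fix $x\in M$, set $V=T_xM$, and let $\h$ denote the isotropy subalgebra of $\sym(J)$ at $x$. Every element of $\h$ preserves $J_x$, so $\h$ embeds into the complex general linear algebra $\mathfrak{gl}_\C(V)\cong\mathfrak{gl}(3,\C)$. Non-degeneracy of the Nijenhuis tensor means that $N_x\colon\Lambda^2_\C\bar V\to V$ is a $\C$-linear isomorphism, where $\bar V=(V,-J_x)$. Since $\h$ also preserves $N_x$, the proposition reduces to showing that the stabilizer in $\mathfrak{gl}_\C(V)$ of any such $\C$-linear isomorphism lies in $\su(p,q)$ for some $(p,q)$ with $p+q=3$.

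To compute this stabilizer, I would choose a complex basis $\{e_i\}$ of $V$ and record the structure coefficients $N_x(\bar e_i\we\bar e_j)=N^k_{ij}e_k$. The induced action of $g\in GL_\C(V)$ on $\Lambda^2_\C\bar V$ has matrix $\op{cof}(\bar g)=\overline{\det g}\cdot\bar g^{-T}$, so the equivariance condition $g\circ N_x=N_x\circ\Lambda^2\bar g$ becomes, after a direct bookkeeping exercise, an equation of the form
\[
 g\,H\,\bar g^{T}=\overline{\det g}\cdot H,
\]
where $H$ is a Hermitian matrix canonically determined by $N_x$ up to a real scalar. Taking determinants yields $(\det g)^3=1$, so on the Lie algebra $\op{tr}_\C A=0$ for every $A\in\h$, i.e.\ $\h\subset\mathfrak{sl}(3,\C)$; the displayed equation linearizes to $AH+H\bar A^T=0$, placing $\h$ inside $\mathfrak{u}(H)\cap\mathfrak{sl}(3,\C)=\su(H)$.

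Finally, the signature of $H$ is the only discrete invariant of $N_x$ under $GL_\C(V)$ up to real rescaling: the orbits are parametrized by pairs $(p,q)$ with $p+q=3$, modulo the involution $(p,q)\mapsto(q,p)$ which preserves $\su(H)$. The two resulting cases are exactly $\su(3)$ and $\su(1,2)$, as asserted. The hypothesis $\dim\h\ge3$ is not actually used in this linearization step; it is a convenient threshold for the subsequent classification of possible isotropies inside these two algebras. The main obstacle I anticipate is the clean extraction of $H$ from $N_x$: the coordinate computation above works, but an invariant derivation (for example, by combining $N_x$ with its complex conjugate to produce a canonical element of $V^*\otimes\bar V^*$ transforming by a fixed character of $\h$) would be preferable and more illuminating.
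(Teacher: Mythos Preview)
Your linearization strategy is sound and potentially cleaner than the paper's, but the step you flag as the ``main obstacle'' is precisely where the argument breaks. The bookkeeping you describe yields the equation $gN\bar g^{T}=\overline{\det g}\cdot N$, where $N$ is the $3\times 3$ complex matrix of the Nijenhuis tensor in your chosen basis. This $N$ is \emph{not} Hermitian in general --- the classification of non-degenerate $N_J$ into the four families NDG(1)--(4), reproduced in the paper immediately after the proposition, exhibits many non-Hermitian $N$. So the linearized condition is $AN+N\bar A^{T}=0$ (together with $\op{tr}_\C A=0$, which you derive correctly), and this places $A$ in the stabilizer of a general sesquilinear form, not a priori in a unitary algebra. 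Your claim that a canonical Hermitian $H$ drops out, unique up to real scalar, is incorrect: what the stabilizer actually preserves is the full real $2$-plane spanned by $(N+N^*)/2$ and $(N-N^*)/(2i)$, and the signatures within that pencil can vary.

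The fix is short once you see it: writing $N=H'+iK'$ with $H',K'$ Hermitian, the real polynomial $(a,b)\mapsto\det(aH'+bK')$ cannot vanish identically (its complex extension takes the value $\det N\ne 0$ at $(a,b)=(1,i)$), so some $aH'+bK'$ is non-degenerate and the stabilizer lies in $\su(aH'+bK')$. With this patch your route is complete, and in fact establishes the proposition without the hypothesis $\dim\h\ge 3$.

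The paper proceeds quite differently. It builds an explicit Hermitian form $h(v,w)=\op{Tr}[N_J(v,N_J(w,\cdot))+N_J(w,N_J(v,\cdot))]$ and a holomorphic $3$-form $\zeta$ from $N_J$; when both are non-degenerate the isotropy lands in $\su(h)$ directly. But $h$ (and $\zeta$) can degenerate, and the paper then falls back on the NDG(1)--(4) classification, computing the full stabilizer $H_0$ by hand on each degenerate locus. The hypothesis $\dim\h\ge 3$ is used there to discard the loci where $H_0$ turns out to be $2$-dimensional, without verifying that those small stabilizers sit inside any $\su(p,q)$. So the paper's argument is more laborious but also delivers more: it pins down the exact stabilizers ($\mathfrak{u}(2)$, $\mathfrak{u}(1,1)$, and their embeddings) that feed into the subsequent sections, information your approach alone does not produce.

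One further correction: your assertion that ``the signature of $H$ is the only discrete invariant of $N_x$ under $GL_\C(V)$'' is false --- the $GL(3,\C)$-orbits of non-degenerate Nijenhuis tensors are the four families NDG(1)--(4) with continuous parameters $\lambda,\phi,\psi$. This does not affect the proposition, which only needs containment in some $\su(p,q)$, but the claim should be dropped.
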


Recall \cite{K$_2$} that with a Nijenhuis tensor we associate a bilinear (1,1)-form
 $$
h(v,w)=\op{Tr}[N_J(v,N_J(w,\cdot))+N_J(w,N_J(v,\cdot))]
 $$
and a holomorphic 3-form ($\op{alt}$ is the total skew-symmetrizer)
 $$
\zeta(u,v,w)=\op{alt}[h(N_J(u,v),w)-i\,h(N_J(u,v),Jw)].
 $$
When both are non-degenerate the symmetry of $(J,N_J)$ has to preserve the (pseudo-)Hermitian metric
and the holomorphic volume form, whence it is a subgroup of the special unitary group (of proper signature).

The proof of Proposition \ref{isotropyisunitary} follows the algebraic classification of NDG types of the Nijenhuis tensors \cite{K$_1$},\cite{K$_2$}:
\begin{enumerate}
\item $N(X_1,X_2)=X_2, N(X_1,X_3)=\lambda X_3, N(X_2,X_3)=e^{i\phi}X_1$
\item $N(X_1,X_2)=X_2, N(X_1,X_3)=X_2+X_3, N(X_2,X_3)=e^{i\phi}X_1$
\item $N(X_1,X_2)=e^{-i\psi}X_3, N(X_1,X_3)=-e^{i\psi} X_2, N(X_2,X_3)=e^{i\phi}X_1$
\item $N(X_1,X_2)=X_1, N(X_1,X_3)= X_2, N(X_2,X_3)=X_2+X_3$
\end{enumerate}
Here $\phi , \psi , \lambda\in\R$.
\medskip

{\bf NDG(1)}. For this class the form $h$ is non-degenerate with the exception of parameters
$\l=1$, $\vp=0,\pi$ and $\l=-1$, $\vp=\pm\pi/2$. The signature of the metric is (4,2),
the form $\zeta$ is a holomorphic volume form (for all parameters), so the
isotropy of the non-exceptional case is a subalgebra of $\mathfrak{su}(2,1)$; in the case of equality the structure
is $G_2^*$-symmetric.

For exceptional parameters note that all of them are equivalent (by a change of the complex basis $\{X_i\}_{i=1}^3$) to the case $\l=1$, $\vp=0$, i.e.
 $$
N_J(X_1,X_2)=X_2,\ N_J(X_1,X_3)=X_3,\ N_J(X_2,X_3)=X_1.
 $$
The kernel of $h$ is the complex 2-plane $\langle X_2,X_3\rangle_\C$
(it is also distinguished by the property $X\in\op{Im}(N_J(X,\cdot))$), and consequently also
the complex line $\langle X_1\rangle_\C=\C\cdot N_J(X_2\we X_3)$ is distinguished.
Thus the symmetry of the pair $(J,N_J)$ is block-diagonal, and it is easy to compute
to be equal to (as the space of $3\times3$ complex matrices)
 $$
H_0=\left\{\begin{pmatrix}e^{2i\theta} & 0\\ 0 & A\end{pmatrix}\,:\ \theta\in\R\op{mod}\pi,\
e^{i\theta}A\in\op{SL}(2,\R)\right\}.
 $$
Indeed, we write the general form
$\Phi(X_1)=e^{2i\theta}X_1$, $\Phi(X_2)=e^{-i\theta}(aX_2+bX_3)$, $\Phi(X_3)=e^{-i\theta}(cX_2+dX_3)$
of $\Phi\in\op{GL}(3,\C)$, and substitute to the defining relations $\Phi\circ J=J\circ\Phi$,
$\Phi\circ N_J=N_J\circ\La^2\Phi$, to find $a,b,c,d,\theta\in\R$, with $ad-bc=1$.

Thus the isotropy $\h_0=\mathfrak{u}(1,1)$ acts on $\C^3=\m$ with (complex) irreducible decomposition $\m=\C\oplus V$. This is the block embedding of $\h_0=\mathfrak{u}(1,1)$ into $\su(1,2)$.
Its subalgebras $\h$ of dimension 3 are $\su(1,1)$ and $\mathfrak{u}(1)\oplus\mathfrak{b}_2$, where the latter summand
is the Borel subalgebra.

\bigskip

{\bf NDG(2)}. For this class $h$ is non-degenerate with the exception of parameters $\vp=0,\pi$.
The signature of the metric is (4,2), the form $\zeta$ is is a holomorphic volume form for all parameters,
so the isotropy of the non-exceptional case is a subalgebra of $\mathfrak{su}(2,1)$.

The Nijenhuis tensor with the exceptional parameters is ($\e=\pm1$)
 $$
N_J(X_1,X_2)=X_2,\ N_J(X_1,X_3)=X_3+X_2,\ N_J(X_2,X_3)=\e X_1.
 $$
For a degenerate Hermitian structure we have:\\ $\op{Ker}(h)=\langle X_2\rangle_\C$,
$\op{Im}N_J(X_2,\cdot)=\langle X_1,X_2\rangle_\C$, $\langle X_1,X_2\rangle_\C^{\perp_h}=\langle X_2,X_3\rangle_\C$,
and finally
$\C\cdot N_J(X_2\we X_3)=\langle X_1\rangle_\C$.
Thus the symmetry of $(J,N_J)$ is given by block-diagonal (in complex coordinates) matrix with blocks of size
$1\times1$ and $2\times2$, the latter being upper-triangular. Now it is easy to compute that this
group is precisely
 $$
H_0=\left\{\begin{pmatrix}e^{2i\theta} & 0 & 0\\ 0 & \e e^{-i\theta} & \beta e^{i\theta}\\
0 & 0 & \e e^{-i\theta}\end{pmatrix}\,:\ \theta\in\R\op{mod}\pi,\
\e=\pm1,\ \beta\in\R\right\}.
 $$
Thus for this type the isotropy is at most 2D, and so should not be considered for the sub-maximal (or sub-sub maximal) problem.

\bigskip

{\bf NDG(3)}.\footnote{This form differs in \cite{K$_1$} and \cite{K$_2$}, and we use expression from the latter (the former form, though it looks differently, is equivalent).}
For this class $h$ is non-degenerate with the exception of parameters $\psi=\pm\tfrac14\pi,\pm\tfrac34\pi$;
$\vp+\psi=\pm\tfrac12\pi$; $\vp-\psi=\pm\tfrac12\pi$. Let us call these exceptional parameters of the first kind.
The signature of the metric can be both (6,0) and (4,2) (as well as the opposite (0,6), (2,4), but we do not distinguish).

The form $\zeta$ is a holomorphic volume form with the exception of parameters
$(\psi,\vp)\in\{(\pm\frac\pi6,\pm\frac\pi2),(\pm\frac\pi3,0),(\pm\frac\pi3,\pi)\}$
(here we use freedom of change of coordinates $X_2\leftrightarrow X_3$ resulting in identification
$(\psi,\vp)\sim(\psi+\pi,\vp+\pi)$). Call these exceptional parameters of the second kind.

Therefore the isotropy of the non-exceptional case is a subalgebra of $\mathfrak{su}(3)$ or
$\mathfrak{su}(2,1)$; the case of equality corresponds to $G_2$ or $G_2^*$-symmetric structures~$J$.

Consider at first exceptional parameters of the first kind. The
Nijenhuis tensor with the exceptional parameters is obtained by substitution of the above values to
 $$
N_J(X_1,X_2)=e^{-i\psi}X_3,\ N_J(X_1,X_3)=-e^{i\psi}X_2,\ N_J(X_2,X_3)=\e X_1.
 $$
Change of basis $X_2\leftrightarrow X_3$ results in $(\psi,\vp)\mapsto(\psi+\pi,\vp+\pi)$, so we can fix
$\psi=\pm\tfrac14\pi$ in the first case, but cannot modify the conditions on exceptional parameters any more.

For the exceptional parameters (as listed) we have: $\op{Ker}(h)$ is $\langle X_1\rangle_\C$,
$\langle X_2\rangle_\C$ or $\langle X_3\rangle_\C$ respectively in the generic exceptional case
or $\langle X_1,X_2\rangle_\C$, $\langle X_1,X_3\rangle_\C$ and $\langle X_2,X_3\rangle_\C$ in the
case of strong degeneration (i.e.\ intersection of two conditions).

For any of the 3+3 exceptional cases we get:
$\C\cdot N_J(X_1\we X_2)=\langle X_3\rangle_\C$,\linebreak 
$\C\cdot N_J(X_1\we X_3)=\langle X_2\rangle_\C$,
$\C\cdot N_J(X_2\we X_3)=\langle X_1\rangle_\C$;
$\op{Im}N_J(X_1,\cdot)=\langle X_2,X_3\rangle_\C$, $\op{Im}N_J(X_2,\cdot)=\langle X_1,X_3\rangle_\C$,
$\op{Im}N_J(X_3,\cdot)=\langle X_1,X_2\rangle_\C$.
Thus in any case the symmetry acts by block-diagonal matrix with $1\times1$ and $2\times2$ blocks
(where $\op{Ker}(h)$ is $1\times1$ block in the first 3 cases, and $2\times2$ block in the second 3 cases).

Consider at first generic exceptional cases. If $\op{Ker}(h)=\langle X_1\rangle_\C$, then the complex-linear operator
$Z\mapsto N_J(X_1,N_J(X_1,Z))$ has eigenvalues $i,-i$ on $\langle X_2,X_3\rangle_\C$,
and this distinguishes $\langle X_2\rangle_\C$ and
$\langle X_3\rangle_\C$, so the symmetry is a subgroup of the diagonal 
$U(1)\times U(1)\times U(1)\subset\op{GL}(3,\C)$,
and we compute it to be
 $$
H_0=\{\op{diag}(e^{i\alpha},e^{i\beta},e^{i\gamma}):\alpha+\beta+\gamma=0\op{mod}2\pi\}.
 $$
This is 2D, so is discarded. The same happens if $\op{Ker}(h)=\langle X_2\rangle_\C$ with
the complex-linear operator $Z\mapsto N_J(X_2,N_J(X_2,Z))$ on $\langle X_1,X_3\rangle_\C$,
and also if\linebreak
 $\op{Ker}(h)=\langle X_3\rangle_\C$ with
the complex-linear operator $Z\mapsto N_J(X_3,N_J(X_3,Z))$ on $\langle X_1,X_2\rangle_\C$.
Thus generic exceptional cases do not carry a sub-maximally symmetric NDG almost complex structure.

Consider now the cases of strong degeneration. If $\op{Ker}(h)=\langle X_1,X_2\rangle_\C$
($\psi=\pm\tfrac14\pi$, $\vp=\pm\tfrac12\pi-\psi$), then the complex-linear operator
$Z\mapsto N_J(X_1,N_J(X_1,Z))$ has eigenvalues $i$ (double) or $-i$ (double), so we cannot reduce to
the diagonal case. So consider the general block from.

Our parameters are $\psi=\e_1\frac\pi4$, $\vp=\e_2\frac\pi2-\e_1\frac\pi4$, and for them
 \begin{gather*}
N_J(X_1,X_2)=\tfrac1{\sqrt{2}}(X_3-\e_1JX_3),\
N_J(X_1,X_3)=-\tfrac1{\sqrt{2}}(X_2+\e_1JX_2),\\
N_J(X_2,X_3)=\tfrac1{\sqrt{2}}(\e_1\e_2X_1+\e_2JX_1);
 \end{gather*}
where $\e_1,\e_2$ are equal to $\pm1$. Now we write the general form
$\Phi(X_1)=e^{-i\theta}(aX_1+bX_2)$, $\Phi(X_2)=e^{-i\theta}(cX_1+dX_2)$, $\Phi(X_3)=e^{2i\theta}X_3$,
and substitute to the defining relations $\Phi\circ J=J\circ\Phi$,
$\Phi\circ N_J=N_J\circ\La^2\Phi$. Then we get either $\e_1\e_2=1$ and
 $$
H_0=\left\{\begin{pmatrix}e^{i(\a-\theta)}\cos\nu & e^{i(\b-\theta)}\sin\nu & 0\\
-e^{-i(\b+\theta)}\sin\nu & e^{-i(\a+\theta)}\cos\nu & 0\\
0 & 0 & e^{2i\theta}\end{pmatrix}\,:\ \a,\b,\theta,\nu\in\R\right\},
 $$
or $\e_1\e_2=-1$ and
 $$
H_0=\left\{\begin{pmatrix}e^{i(\a-\theta)}\cosh\nu & e^{i(\b-\theta)}\sinh\nu & 0\\
e^{-i(\b+\theta)}\sinh\nu & e^{-i(\a+\theta)}\cosh\nu & 0\\
0 & 0 & e^{2i\theta}\end{pmatrix}\,:\ \a,\b,\theta,\nu\in\R\right\}.
 $$
These groups equal $SU(2)\times U(1)$ and $SU(1,1)\times U(1)$, and so $\h_0=\mathfrak{u}(2)$ or $\h_0=\mathfrak{u}(1,1)$, respectively.
The cases of $\op{Ker}(h)=\langle X_1,X_3\rangle_\C$ and $\op{Ker}(h)=\langle X_2,X_3\rangle_\C$ are treated similarly, and result in the same symmetry groups.

Now let us consider exceptional parameters of the second kind. Recall from \cite{K$_1$} that
we have two anti-holomorphic maps
$\Phi_1:\C P^2\rightarrow\op{Gr}_2^\C(3)\simeq\C P^2$, $\C\langle X\rangle\mapsto\op{Im}N(X,\cdot)$, and
$\Phi_2:\op{Gr}_2^\C(3)\rightarrow\C P^2$, $\C^2\langle Y,Z\rangle\mapsto\C\langle N(Y,Z)\rangle$.
For non-degenerate $N_J$ the composition $\Phi=\Phi_2\circ\Phi_1$ is a bi-holomorphism of $\C P^2$.

Direct computation shows that it has precisely 3 fixed points $\langle X_1\rangle_\C$,
$\langle X_2\rangle_\C$, $\langle X_3\rangle_\C$, provided $\psi\not\equiv\pm\vp\op{mod}\pi$ and
$2\psi\not\equiv0\op{mod}\pi$. Our exceptional parameters of the second kind satisfy these inequalities,
so the symmetry is a subgroup of the diagonal $U(1)\times U(1)\times U(1)\subset\op{GL}(3,\C)$,
and we compute it to be
 $$
H_0=\{\op{diag}(e^{i\alpha},e^{i\beta},e^{i\gamma}):\alpha+\beta+\gamma=0\op{mod}2\pi\}.
 $$
This is 2D, so is discarded.

\bigskip

{\bf NDG(4)}. For this class $h$ is non-degenerate (without exceptions) with the
signature (4,2). The form $\zeta$ is a holomorphic volume form for all parameters.
Hence the isotropy is a subalgebra of $\mathfrak{su}(2,1)$.

\section{The case of locally transitive $\op{Aut}(J)$}
 By \cite{K$_2$} we know that if $G$ preserves a non-degenerate $J$, then the isotropy representation is faithful on the isotropy algebra $\h\subset \g$. By Proposition \ref{isotropyisunitary},  $\h$ is a subalgebra of either $\su(3)$ or $\su(1,2)$ in their standard representations. We can assume that $\h$ is a proper subalgebra, because otherwise $G$ has dimension 14 and hence is maximal.

 From \cite{Bu} we know that there is an invariant SNK structure on $Sp(2)/U(2)$, and SNK always has NDG $N_J$. This is a non-degenerate structure $J$ with 10D symmetry. To achieve symmetry of dimension $\ge 10$ as in Theorem 1, we have to consider the case when $\dim \h\ge\dim \mathfrak{u}(2)=4$. Thus, up to conjugacy the possible $\h$ for sub-maximal almost complex structures are:
 \begin{itemize}
 \item $\mathfrak{u}(2)\subset \su(3)$.
 \item $\mathfrak{u}(2)\subset \su(1,2)$.
 \item $\mathfrak{u}(1,1)\subset \su(1,2)$.
 \item The (only) parabolic subalgebra $\p\subset \su(1,2)$ of dimension 5.
 \item A 4D maximal subalgebra $\r$ of $\p$.
 \end{itemize}
  In addition to this, for the needs of Theorem 2 we shall consider the subalgebras $\h\subset\p$ with $\dim \h \ge 3$. These subalgebras are most easily described by considering the $\mathbb{Z}$-grading which exists on a parabolic subalgebra.
 \begin{prop}
 The algebra $\p=\oplus_{i\in\mathbb{Z}} \p_i$ is solvable and graded, with $\dim(\p_0)=2$, $\dim(\p_1)=2$, $\dim(\p_2)=1$ and $\dim(\p_i)=0$ for all other $i$. As a Lie algebra it is the extension of the 3D Heisenberg algebra $\mathfrak{heis}_3=\C\oplus\R$ by derivations $\mathfrak{gl}(1,\C)\subset\mathfrak{gl}_2(\R)\subset\mathfrak{Der}(\mathfrak{heis}_3)$.
 \end{prop}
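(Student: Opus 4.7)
The plan is to derive the claim from the structure theory of real semisimple Lie algebras of real rank one, where $\p$ is forced to be a minimal parabolic. Since $\su(1,2)$ has real rank $1$, there is (up to conjugacy) only one proper parabolic subalgebra, and it must be isomorphic to $\mathfrak{m}\oplus\mathfrak{a}\oplus\mathfrak{n}$ coming from the Iwasawa decomposition of $\su(1,2)$. Fixing a Cartan involution with maximal compact $\mathfrak{k}\cong\mathfrak{s}(\mathfrak{u}(1)\oplus\mathfrak{u}(2))\cong\mathfrak{u}(2)$ and a maximal abelian subspace $\mathfrak{a}$ in the $(-1)$-eigenspace of the Cartan involution, one has $\dim\mathfrak{a}=1$.

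Next I would decompose $\su(1,2)$ under $\op{ad}\mathfrak{a}$. The restricted root system is of type $BC_1$, with roots $\pm\alpha,\pm 2\alpha$ and multiplicities $\dim\g_{\pm\alpha}=2$, $\dim\g_{\pm 2\alpha}=1$; this can either be read off the Satake diagram of $\su(p,q)$ or verified by hand in an explicit matrix realization (e.g.\ taking the Hermitian form with anti-diagonal matrix of signature $(1,2)$, so that $\p$ consists of upper-triangular matrices preserving a chosen isotropic complex line). The centralizer $\mathfrak{m}=Z_\mathfrak{k}(\mathfrak{a})$ is then $1$-dimensional, and
$$
\p=\mathfrak{m}\oplus\mathfrak{a}\oplus\g_\alpha\oplus\g_{2\alpha}
$$
has the required dimension $1+1+2+1=5$. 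Defining the $\Z$-grading by $\p_i=\{x\in\p:[H_0,x]=i\,x\}$ for the unique $H_0\in\mathfrak{a}$ with $\alpha(H_0)=1$ yields $\p_0=\mathfrak{m}\oplus\mathfrak{a}$, $\p_1=\g_\alpha$, $\p_2=\g_{2\alpha}$, with the stated dimensions, and $\p_i=0$ for $i\notin\{0,1,2\}$.

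Then I would identify the nilpotent radical $\p_1\oplus\p_2$ with the Heisenberg algebra. By grading, $[\p_1,\p_1]\subseteq\p_2$, $[\p_1,\p_2]\subseteq\p_3=0$ and $[\p_2,\p_2]\subseteq\p_4=0$, so $\p_2$ is central in $\p_1\oplus\p_2$. For type $BC_1$ the bracket $\g_\alpha\times\g_\alpha\to\g_{2\alpha}$ is well-known to be non-trivial (equivalently, $\p_1\oplus\p_2$ is not abelian, as can be seen either from a one-line computation in the explicit matrix model or from the non-triviality of the $2\alpha$ root space), so the bracket on $\p_1$ with values in the $1$-dimensional center $\p_2$ is a non-degenerate skew form on $\p_1\cong\R^2$. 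This is precisely the definition of $\mathfrak{heis}_3=\C\oplus\R$. The complex structure on $\p_1=\g_\alpha$ is inherited from the embedding $\su(1,2)\hookrightarrow\sl(3,\C)$ and commutes with the action of $\p_0$.

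Finally I would analyse $\p_0$ and assemble the extension. The subalgebra $\p_0=\mathfrak{m}\oplus\mathfrak{a}$ is $2$-dimensional abelian (both summands are $1$-dimensional and $\mathfrak{m}$ centralises $\mathfrak{a}$ by construction), acts faithfully on $\p_1$ by derivations preserving the induced complex structure, and the two generators act respectively as a real dilation and an imaginary rotation, so together they span a copy of $\mathfrak{gl}(1,\C)\subset\mathfrak{gl}_2(\R)\subset\mathfrak{Der}(\mathfrak{heis}_3)$. Together with the grading this exhibits $\p$ as the claimed semidirect extension. Solvability follows at once: $[\p,\p]\subseteq\p_1\oplus\p_2$ because $\p_0$ is abelian and grading-preserving brackets with $\p_0$ land in $\p_1\oplus\p_2$, and $\p_1\oplus\p_2$ is nilpotent, so the derived series terminates.

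The only delicate point I expect is the last one, namely pinning down that the two-parameter family of derivations coming from $\p_0$ is exactly $\mathfrak{gl}(1,\C)$ rather than a more general $2$-dimensional abelian subalgebra of $\mathfrak{gl}_2(\R)$; this is best checked in an explicit matrix realization of $\su(1,2)$, after which everything else is automatic from the $BC_1$ root data.
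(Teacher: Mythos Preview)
Your argument via the Iwasawa decomposition and the $BC_1$ restricted root data of $\su(1,2)$ is correct and is the standard way to unpack the structure of this parabolic; the dimensions, the Heisenberg identification of $\p_1\oplus\p_2$, the abelianity of $\p_0$, and solvability all follow exactly as you say. The paper itself does not supply a proof of this proposition at all --- it is stated as a structural fact and immediately used to enumerate the subalgebras of $\p$ --- so there is no ``paper's proof'' to compare against; your write-up simply fills in what the authors took for granted.

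One minor remark: your final caveat about pinning down that $\p_0$ acts as $\mathfrak{gl}(1,\C)$ rather than some other $2$-dimensional abelian subalgebra of $\mathfrak{gl}_2(\R)$ is already essentially settled by your own observation that $\mathfrak{m}\subset\mathfrak{k}$ acts compactly (hence as a rotation) on the $2$-plane $\p_1$ while $\mathfrak{a}$ acts by the grading (a real scalar); a compact $1$-parameter group and a real dilation commuting on $\R^2$ is exactly $\mathfrak{gl}(1,\C)$, so no explicit matrix check is really needed.
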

 The proper subalgebras $\h$ of $\p$ with $\dim \h\ge3$ are then given by specifying $\dim \h\in\{3,4\}$ and $\dim(\h\cap \p_0)$. It can be shown that any such subalgebra is conjugate to a graded subalgebra in $\p$. This gives 4 non-equivalent possibilities for $\h\subset \p$:
 \begin{itemize}
 \item The 4D subalgebras $\r$, which have $\dim(\r\cap \p_0)=1$, a 1D family specified by which 1D subalgebra of $\p_0$ is included.
 \item The 3D nilradical $\li_0=\p_1\oplus \p_2$ of $\p$, which has $\dim(\li_0\cap \p_0)=0$.
 \item The 3D subalgebra $\li_1$ which has $\dim(\li_1\cap \p_0)=1$, and contains the grading element $s$ of $\p$: $\li_1=\R s \oplus \R v \oplus \p_2$, the choice of $v\in\p_1$ is not essential.
 \item The 3D subalgebra $\li_2$ which has $\dim(\li_2\cap \p_0)=2$, and contains no elements from $\p_1$, i.e. $\li_2=\p_0\oplus \p_2$.
 \end{itemize}
 The list of subalgebras of $\su(1,2)$ is the same as the one found in \cite{PWZ}, but their naming convention is different. All algebras given here come equipped with a representation $\m$, the restriction of the tautological representations of $\su(3)$ or $\su(1,2)$. These algebras are not all reductive, so the representation of $\h$ on $\g$ may not split into a direct sum of $\m$ and $\h$. This means that the homogeneous space may not be reductive. The quotient $\h$-module $\g / \h$ must however be of the given type $\m$.

 \subsection{The $\h$-module structure of $\g$}
 In the event that $\g$ does not split into a direct sum of $\h$ and $\m$, we choose an arbitrary complement of $\h$ which we will still denote by $\m$, even though it is not a submodule. We have
 \begin{align*}
 [h,m]=\mu(m)h+h\,m\in \h \oplus \m
 \end{align*}
 for some $\mu:\m\rightarrow \End(\h)$. Here $h\,m$ denotes the action of $\h$ on the module $\m=\g/\h$. Let us change the complement $\m$ by some operator $A:\m\rightarrow \h$, so that the new complement is $\m_{new} = \{(A(m),m)|m\in\m \}$. Then
 \begin{align*}
 [h,m+Am]=\mu(m)h+[h,Am]-A(h\,m)+h\,m+A(h\,m)\in \h \oplus \m
 \end{align*}
 and the first three terms describe $\mu_{new}$. If we denote by $d_\h$ the Lie algebra differential in the complex $\Lambda^\ast \h^\ast \otimes \m^\ast \otimes \h$ of $\Hom(\m,\h)$-valued forms on $\h$, this can be written as
 \begin{align*}
 \mu_{new}=\mu+d_\h A.
 \end{align*}
 Moreover, from the Jacobi identity between elements $m,h_1,h_2$ we get $d_\h \mu =0$, so $\mu$ is a cocycle. This gives the following statement (it can also be seen as a result of the isomorphism $\op{Ext}^1_\mathfrak{h}(\mathfrak{m},\mathfrak{h})=H^1(\mathfrak{h},\op{Hom}(\mathfrak{m},\mathfrak{h}))$ and the extension obstruction for modules \cite{Gi}).
 \begin{lem}\label{cohomologylemma}
 The equivalence classes of $\h$-modules $\g$ with $\g /\h \simeq \m$ are given by the Lie algebra cohomology $H^1(\h,\Hom(\m,\h))$. In particular, if the cohomology vanishes, then $\g=\h\oplus\m$ is a direct sum.
 \end{lem}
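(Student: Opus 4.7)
The plan is to repackage the computation already carried out just above the lemma statement into the standard correspondence between $\h$-module extensions and first cohomology. View the data as a short exact sequence of $\h$-modules
$$0 \rightarrow \h \rightarrow \g \rightarrow \m \rightarrow 0,$$
in which $\h$ acts on itself by the adjoint representation and on $\m$ by the prescribed action. A choice of vector-space complement to $\h$ in $\g$ identifies $\g$ with $\h\oplus\m$ and produces the 1-cochain $\mu\in C^1(\h,\Hom(\m,\h))$ defined by $[h,m]=\mu(m)h+h\,m$. Any two complements differ by some $A\in\Hom(\m,\h)$, and the computation already displayed shows that under such a change $\mu$ becomes $\mu+d_\h A$, where $d_\h$ is the Chevalley--Eilenberg differential with coefficients in $\Hom(\m,\h)$, the $\h$-action on $\Hom(\m,\h)$ being induced by the actions on $\m$ and on $\h$.

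Next I would verify that $\mu$ is closed. Expanding the representation axiom $[[h_1,h_2],m]=[h_1,[h_2,m]]-[h_2,[h_1,m]]$ via the defining formula and using that $\h$ acts on itself by the adjoint representation, the $\h$-component rearranges to
$$(d_\h\mu)(h_1,h_2)(m)=0 \quad \text{for all } h_1,h_2\in\h \text{ and } m\in\m,$$
while the $\m$-component is automatic, since $\m$ is already an $\h$-module. This is the Jacobi-identity step indicated in the excerpt. Together with the coboundary calculation, it shows that the class $[\mu]\in H^1(\h,\Hom(\m,\h))$ is a well-defined invariant of the extension.

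Conversely, I would check that every cocycle $\mu$ defines an $\h$-module structure on the vector space $\h\oplus\m$ by
$$h\cdot(h',m):=\bigl([h,h']_\h+\mu(m)h,\;h\,m\bigr),$$
with the cocycle condition being precisely what the representation axiom requires. Cohomologous cocycles produce equivalent modules (the conjugation being realised by the change of splitting encoded in $A$) and, conversely, any module equivalence, read off in a pair of splittings, exhibits the two cocycles as cohomologous; this yields the claimed bijection with $H^1(\h,\Hom(\m,\h))$. The final assertion is then immediate: if $H^1=0$, then $[\mu]=0$, so a splitting trivialising $\mu$ can be found, exhibiting $\g$ as the direct sum $\h\oplus\m$. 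The main obstacle is purely bookkeeping with the sign conventions for $d_\h$ on $\Hom(\m,\h)$-valued cochains; once these are pinned down, every step reduces to a direct expansion of Jacobi, and the statement is a special instance of the standard isomorphism $\op{Ext}^1_\h(\m,\h)\simeq H^1(\h,\Hom(\m,\h))$ cited in the text.
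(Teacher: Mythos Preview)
Your proposal is correct and follows essentially the same approach as the paper: the argument preceding the lemma in the text already introduces the cochain $\mu$ from a choice of complement, computes $\mu_{\mathrm{new}}=\mu+d_\h A$ under a change of splitting, and invokes the Jacobi identity on $h_1,h_2,m$ to get $d_\h\mu=0$, then cites the identification $\op{Ext}^1_\h(\m,\h)\simeq H^1(\h,\Hom(\m,\h))$. You have spelled out the converse direction (that every cocycle yields a module and cohomologous cocycles give equivalent extensions) slightly more explicitly than the paper does, but the route is the same.
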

 The computation of this cohomology was performed in Maple, and worksheets are available in the supplement. The result is the following.
 \begin{prop}\label{cohomologylist}
 For the reductive subalgebras $\h$ of $\su(3)$, $\su(1,2)$ with $\dim\h\ge3$, we have $\dim H^1(\h,\Hom(\m,\h))=0$. Let $s\in\p$ be the grading element. For the solvable subalgebras $\h$ of $\su(3)$, $\su(1,2)$ with $\dim\h\ge3$, we have
 \begin{itemize}
 \item $\dim H^1(\p,\Hom(\m,\p))=0$,
 \item $\dim H^1(\r,\Hom(\m,\r))=6$ when $s\in\r$,
 \item $\dim H^1(\r,\Hom(\m,\r))=0$ when $s\not\in\r$,
 \item $\dim H^1(\li_0,\Hom(\m,\li_0))=10$,
 \item $\dim H^1(\li_1,\Hom(\m,\li_1))=4$,
 \item $\dim H^1(\li_2,\Hom(\m,\li_2))=0$.
 \end{itemize}
 \end{prop}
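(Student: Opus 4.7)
The plan is to compute $H^1(\h,\Hom(\m,\h))$ for each of the listed subalgebras using the Chevalley--Eilenberg complex $\Lambda^\bullet\h^*\otimes\Hom(\m,\h)$, with $\h$-module structure on both factors induced from the embedding $\h\hookrightarrow\su(3)$ or $\su(1,2)$. Concretely, I would fix a matrix basis of $\h$ and a complementary basis of $\m\simeq\C^3$ read off from the standard realization, express the differentials $d^0$ and $d^1$ as linear maps between the finite-dimensional spaces of $0$- and $1$-cochains, and compute $\ker d^1/\op{im}d^0$.

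For the reductive isotropies $\mathfrak{u}(2)\subset\su(3)$, $\mathfrak{u}(2)\subset\su(1,2)$ and $\mathfrak{u}(1,1)\subset\su(1,2)$, I decompose $\h=\h_s\oplus\mathfrak{z}$ into its simple ideal and one-dimensional center and apply the Hochschild--Serre spectral sequence. Whitehead's first lemma kills $H^1(\h_s,\cdot)$, collapsing the sequence to
\[
H^1(\h,\Hom(\m,\h))\cong H^1\bigl(\mathfrak{z},\Hom(\m,\h)^{\h_s}\bigr).
\]
A short Clebsch--Gordan computation on $\m^*\otimes\h$, using that $\m$ is the standard two-dimensional complex representation (with a central twist) and $\h/\mathfrak{z}\simeq\h_s$ is adjoint, shows that $\Hom(\m,\h)^{\h_s}=0$, giving the claimed vanishing.

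For the parabolic $\p\subset\su(1,2)$ and those of its subalgebras containing the grading element $s\in\p_0$ (namely $\p$, $\r$ with $s\in\r$, $\li_1$ and $\li_2$), I would exploit the $\Z$-grading of $\p$. The operator $\op{ad}_s$ acts diagonalizably on the whole complex and commutes with $d$, so the cohomology decomposes by $s$-weight; on every nonzero-weight summand the identity $[\iota_s,d]=L_s$ furnishes a contracting chain homotopy, leaving only the weight-zero part to analyze. Enumerating the weight-zero cocycles modulo coboundaries in explicit bases yields $0$ for $\p$ and $\li_2$, $6$ for $\r\ni s$, and $4$ for $\li_1$. For the subalgebras where $s$ is absent, namely $\li_0$ and $\r$ with $s\notin\r$, the contracting homotopy is unavailable; instead I would use the induced filtration by $\p$-degree, compute cohomology of the associated graded, and then track the corrections from the non-graded terms in $\mu$ of Lemma \ref{cohomologylemma}. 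The answers are $10$ for $\li_0$ and $0$ for $\r$ with $s\notin\r$.

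The main obstacle will be the explicit bookkeeping for $\li_0$: this is the nilradical of $\p$, admits no grading element and no nontrivial semisimple element, so none of the structural simplifications apply and the $10$-dimensional result has to come out of direct linear algebra on the full cochain spaces. This is where the Maple worksheets in the supplement do the heavy lifting; the output there is consistent with the heuristic that the nilpotency of $\li_0$ maximises the number of inequivalent $\h$-module extensions of $\m$ by $\h$. For the other cases the weight-zero enumeration could in principle be done by hand, but checking that the module structure reflects the correct correction $\mu$ is sufficiently delicate that a symbolic implementation of the differential is the only reliable route.
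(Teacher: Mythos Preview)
The paper does not give a proof of this proposition at all: the sentence immediately preceding it reads ``The computation of this cohomology was performed in Maple, and worksheets are available in the supplement.'' So the paper's argument is pure symbolic linear algebra on the Chevalley--Eilenberg complex, case by case. Your proposal is therefore \emph{more} structured than what the paper actually does: you try to reduce the reductive cases via Hochschild--Serre and Whitehead, and the graded solvable cases via the contracting homotopy $[\iota_s,d]=L_s$, leaving only the genuinely hard residues (weight-zero pieces, and the ungraded $\li_0$) to machine computation. That is a legitimate and arguably cleaner route.

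There is, however, a concrete error in your reductive argument. The isotropy module $\m$ is not the standard two-dimensional complex representation of $\h$: it is the restriction to $\h$ of the tautological $\C^3$ of $\su(3)$ or $\su(1,2)$, and under $\h=\mathfrak{u}(2)$ or $\mathfrak{u}(1,1)$ it splits as $\m=V\oplus\C$ (this is stated explicitly in the paper's discussion of NDG(1) and again at the start of the non-solvable isotropy subsection). Consequently your claim $\Hom(\m,\h)^{\h_s}=0$ is false: the summand $\C^*\otimes\mathfrak{z}\subset\m^*\otimes\h$ is $\h_s$-trivial and survives. The conclusion $H^1=0$ is still correct, but for a different reason: the center $\mathfrak{z}$ acts on this surviving piece with nonzero weight (the $\C$ factor in $\m$ carries a nontrivial central character), so $H^1(\mathfrak{z},\C^*\otimes\mathfrak{z})=0$. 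Once you fix the decomposition of $\m$ and redo the Clebsch--Gordan bookkeeping, your spectral-sequence argument goes through.

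For the solvable cases your proposal is really an outline rather than a proof (``I would use the induced filtration\dots''), and in the end you defer the $\li_0$ case to the same Maple worksheets the paper invokes. So on that part your approach and the paper's coincide in substance; the grading-element reduction is a genuine simplification you add, but the numbers $6$, $4$, $10$ are still asserted rather than derived.
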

 Hence the $\h$-module $\g$ decomposes into a direct sum $\g=\m\oplus\h$ when $\h$ is reductive, $\h=\p$, $\h=\li_2$ or $\h=\r$ for $s\not\in\r$. In the other cases, there are non-decomposable $\h$-modules $\g$ which satisfy $\g/\h=\m$, and these are parameterized by elements of the cohomology.
\subsection{Lie algebra structures on the $\h$-module $\g$}
Let $\h$ be a Lie algebra and $\g$ be an $\h$-module such that $\h\subset\g$ as a submodule. By a \textit{Lie algebra extension}\footnote[1]{This is different from "right" or extensions by derivations\cite{F}.} of $\h$ on $\g$, we mean a bracket operation
\begin{align*}
[,]:\La^2\g\rightarrow\g
\end{align*}
which satisfies the usual Lie algebra axioms and the restriction criteria that
\begin{align*}
&[,]:\La^2\h\rightarrow\h\\
&[,]:\h\wedge \g\rightarrow\g
\end{align*}
are respectively the Lie bracket of $\h$ and the module action of $\h$ on $\g$.

\begin{lem}
Those Jacobi identities of the bracket $[,]$ which involve an element from $\h$ are equivalent to the $\h$-equivariancy of $[,]$.
\end{lem}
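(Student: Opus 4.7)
The plan is to organise the Jacobi identities on triples $(a,b,c)\in\g^3$ by how many of the entries lie in $\h$. The identities that ``involve an element from $\h$'' fall into exactly three cases: three, two, or one entry in $\h$. I would show that the first two cases are automatically satisfied by the hypotheses built into the definition of an extension, and that the third is equivalent to the $\h$-equivariance of $[\,,\,]$.

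The three-$\h$ case reduces to the Jacobi identity inside $\h$, which holds by hypothesis since the bracket restricts to the Lie bracket of $\h$ on $\La^2\h$. For the two-$\h$ case, I take $h_1,h_2\in\h$ and $x\in\g$, and use the shorthand $h\cdot v:=[h,v]$ for $v\in\g$, which by the restriction criterion is the module action on $\g$. The Jacobi identity on $(h_1,h_2,x)$ then expands to
\[
h_1\cdot(h_2\cdot x)-h_2\cdot(h_1\cdot x)-[h_1,h_2]\cdot x=0,
\]
which is exactly the axiom that $\g$ is an $\h$-module, already part of the data.

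The remaining case is the one-$\h$ Jacobi on $(h,x,y)$ with $h\in\h$ and $x,y\in\g$. Unfolding the identity and using antisymmetry together with the identification of $[h,\cdot]$ with the module action gives
\[
h\cdot[x,y]-[h\cdot x,y]-[x,h\cdot y]=0,
\]
which is precisely the condition that $[\,,\,]:\La^2\g\to\g$ is a homomorphism of $\h$-modules, i.e.\ the $\h$-equivariance of the bracket. Since the chain of equalities is reversible, the one-$\h$ Jacobi identity is logically equivalent to $\h$-equivariance, while the other two cases are vacuous. Combined, this proves the lemma.

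I expect no substantial obstacle here: the argument is a bookkeeping statement, and the only care needed is the consistent use of the identification $[h,v]=h\cdot v$ for arbitrary $v\in\g$ (valid regardless of whether $v$ lies in $\h$ or in a chosen complement) and the antisymmetry of $[\,,\,]$. The conceptual content is that the module axioms on $\g$ already absorb all Jacobi identities with at least two $\h$-entries, so the only genuinely new constraint is equivariance of the bracket on the quotient-level data.
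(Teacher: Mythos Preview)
Your proposal is correct and follows essentially the same approach as the paper's proof: both split the Jacobi identities involving $\h$ into the three-, two-, and one-$\h$-entry cases, observe that the first two are forced by the Lie algebra and module axioms respectively, and identify the third with $\h$-equivariance of the bracket. The only cosmetic difference is that the paper fixes a vector-space complement $\m$ and takes the remaining entries from $\m$, whereas you take them from all of $\g$; by linearity these are equivalent formulations.
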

\begin{proof}
Let $\m$ be a complement to $\h$ in $\g$ as a vector space. The Jacobi relation involving 3 elements from $\h$, $\op{Jac}_\h:\La^3\h\rightarrow\h$, vanishes as $\h$ is a Lie algebra. The Jacobi relation involving 2 elements from $\h$ and 1 from $\m$ vanishes
as $\m$ is an $\h$-module. Finally the Jacobi relation involving 1 element from $\h$ and 2 from $\m$
is precisely the equivariancy of the map $[,]$.
\end{proof}
\begin{cor}\label{cor2}
The bracket $[,]\in(\La^2\g^\ast\otimes\g)^\h$ which satisfies the restriction criteria is a Lie algebra extension iff the Jacobi identity $\op{Jac}_\m$ vanishes on a complement $\m$ of $\h$ in $\g$.
\end{cor}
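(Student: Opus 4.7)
The plan is to observe that Corollary \ref{cor2} is essentially a bookkeeping consequence of the preceding Lemma, combined with the multilinearity of the Jacobi identity. The bracket $[,]$ is already given as an element of $(\La^2\g^\ast\otimes\g)^\h$, so by definition it is $\h$-equivariant, and the restriction criteria are imposed separately. The only nontrivial condition for it to define a Lie algebra is then the full Jacobi identity $\op{Jac}_\g:\La^3\g\to\g$.

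First I would fix a vector space decomposition $\g=\h\oplus\m$ and use trilinearity of $\op{Jac}_\g$ to expand it over the four possible distributions of arguments between $\h$ and $\m$. Concretely, every triple $(x,y,z)\in\g^3$ decomposes into a sum of triples whose entries are each either in $\h$ or in $\m$, and since $\op{Jac}_\g$ is trilinear it suffices to verify the identity on such pure triples. There are four cases according to the number $k\in\{0,1,2,3\}$ of arguments lying in $\h$.

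Next I would dispose of the cases $k=3$, $k=2$, $k=1$ using the hypotheses and the Lemma:
\begin{itemize}
\item $k=3$: $\op{Jac}_\g$ restricted to $\La^3\h$ equals $\op{Jac}_\h$, which vanishes because the restriction criterion forces $[,]|_{\La^2\h}$ to be the Lie bracket of $\h$;
\item $k=2$: the identity with two arguments in $\h$ and one in $\m$ is the associativity/compatibility condition for $\m$ to be an $\h$-module under the action $(h,m)\mapsto[h,m]$, which holds because this action is by hypothesis the given $\h$-module structure on $\g$;
\item $k=1$: by the Lemma, the Jacobi identity with one argument in $\h$ and two in $\m$ is equivalent to $\h$-equivariance of $[,]$, which is precisely our assumption $[,]\in(\La^2\g^\ast\otimes\g)^\h$.
\end{itemize}

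The only remaining case is $k=0$, which by definition is $\op{Jac}_\m$ evaluated on three arguments from $\m$. Hence $\op{Jac}_\g\equiv 0$ if and only if $\op{Jac}_\m$ vanishes on $\m$, yielding the corollary. There is essentially no obstacle here: the argument is a direct unpacking of multilinearity, so the only thing to be careful about is to note that in the case $k=0$ one must expand $[m_1,m_2]=(h\text{-part})+(\m\text{-part})$ before bracketing with $m_3$, and check that the $\h$-parts are taken care of by the $k=1$ identity already verified. I would state this explicitly to make the reduction to $\op{Jac}_\m$ rigorous.
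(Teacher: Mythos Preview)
Your argument is correct and matches the paper's approach exactly: the paper proves the preceding Lemma by handling the cases $k=3,2,1$ (three, two, one arguments from~$\h$) just as you do, and then states the Corollary without further proof since only the $k=0$ case remains.

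One small point: your final caveat is unnecessary. In the paper's usage, $\op{Jac}_\m$ simply means the full Jacobiator $\op{Jac}_\g$ restricted to triples from $\m\times\m\times\m$, with values in $\g$. In particular, $[[m_1,m_2],m_3]$ is computed using the bracket on all of $\g$, so the $\h$-component of $[m_1,m_2]$ is already part of $\op{Jac}_\m$ by definition --- there is no further reduction to perform, and the $k=1$ identity plays no role in the $k=0$ computation. Your suggestion that ``the $\h$-parts are taken care of by the $k=1$ identity'' would only be relevant if $\op{Jac}_\m$ denoted some projected Jacobi-type expression on $\m$ alone, which it does not. Simply drop that paragraph.
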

We denote the space of elements of $\g^\ast$ which vanish on $\h$ by $\m^\ast$. This can be identified with the dual space of a complement $\m$ to $\h$, and $\m^\ast$ is a submodule of $\g^\ast$. Thus
\begin{align*}
(\La^2\m^\ast\otimes\g)^\h\subset(\La^2\g^\ast\otimes\g)^\h
\end{align*}
is also a submodule. We call $B(\h,\g)=(\La^2\m^\ast\otimes\g)^\h$ the space of $\h$-equivariant brackets. We have that if $\theta\in (\La^2\g^\ast\otimes\g)^\h$ satisfies the restriction criterion and $\phi\in B(\h,\g)$, then $\theta+\phi$ also satisfies the criterion. Thus the Lie algebra extensions are contained in an affine subspace of $(\La^2\g^\ast\otimes\g)^\h$ which is modeled on $B(\h,\g)$. By Corollary \ref{cor2}, the bracket $\phi+\theta$ defines a Lie algebra extension iff it satisfies $\op{Jac}_\m (\phi+\theta)=0$.

\subsubsection{Solvable Isotropy}
The list of possible isotropy algebras $\h$ and $\h$-modules $\g$ which preserve an almost complex structure and Nijenhuis tensor on $\g/\h=\m$ with $\dim\m=6$ is given by Lemma \ref{cohomologylemma} and Proposition \ref{cohomologylist}. The $\h$-modules $\g$ are parameterized by choosing representatives $\mu\in \h^\ast \otimes \m^\ast \otimes \h, d_\h\mu=0$ via a splitting of the canonical projection $Z^1(\h,\m^\ast \otimes \h)\rightarrow H^1(\h,\m^\ast \otimes \h)$.

Given an arbitrary complement $\m$ to $\h$ in $\g$, these representatives are maps $\mu:\h\times\m\rightarrow\h$. The representation then consists of block matrices with respect to the decomposition $\g=\h\oplus\m$, and $\mu$ describes the upper-triangular block. In particular, the representation matrices are block diagonal when the cohomology vanishes.

Consider $\alpha\in \La^2\m^\ast\otimes\g$. We have that $\alpha\in B(\h,\g)$ if $h\,\alpha=0$ for all $h\in\h$. When the cohomology does not vanish, this system consists of linear equations in the parameters of $\alpha$, and quadratic equations in the parameters of both $\mu$ and $\alpha$. Let $\alpha_0$ be an element in the solution space of the linear equations. The Jacobi identity for $\alpha_0$ is a system of equations in the parameters of $\alpha_0$ and $\mu$ which contains linear equations in the parameters of $\mu$. These imply in each case that $\mu=0$ whenever the Jacobi identity is satisfied. This yields the following proposition.

\begin{prop}
The only $\h$-modules $\g$ which admit Lie algebra extensions are the direct sums $\g=\h\oplus\m$.
\end{prop}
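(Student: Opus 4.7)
By Proposition \ref{cohomologylist}, the only isotropies $\h$ with $\dim\h\ge3$ producing non-split $\h$-module structures on $\g$ are the solvable algebras $\r$ with $s\in\r$, the nilradical $\li_0$, and $\li_1$, with $\dim H^1 = 6, 10, 4$ respectively. In all other cases ($\h$ reductive, $\h=\p$, $\h=\li_2$, or $\h=\r$ with $s\notin\r$) the cohomology vanishes, so $\g=\h\oplus\m$ splits automatically and there is nothing to prove. Accordingly, the task is to rule out a Lie algebra extension on the genuinely non-split $\h$-modules $\g$ in each of these three cases.

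First I would fix concrete bases. Since $\r,\li_0,\li_1$ all sit inside the parabolic $\p\subset\su(1,2)$, I exploit the $\Z$-grading $\p=\p_0\oplus\p_1\oplus\p_2$ of the preceding proposition, choose adapted generators of $\h$, and pick a complement $\m$ in $\g$ compatible with the induced $\h$-weight decomposition of $\m=\g/\h$ (here $\m$ is the restriction of the tautological representation of $\su(1,2)$). Next, I parameterize the cocycle space $Z^1(\h,\m^*\otimes\h)$ and fix a linear splitting $H^1\hookrightarrow Z^1$, obtaining concrete representatives $\mu$ with $6$, $10$, and $4$ free scalar parameters. The module structure on $\g$ is then read off directly from $\mu$: the action matrices are block upper-triangular with the off-diagonal block given by $\mu$.

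The heart of the argument follows the scheme already set up in the excerpt. I fix a canonical bracket $\theta_0\in(\La^2\g^*\otimes\g)^\h$ encoding the Lie bracket of $\h$ and the module action, so that any candidate extension has the form $\theta_0+\alpha$ with $\alpha\in B(\h,\g)=(\La^2\m^*\otimes\g)^\h$. I compute $B(\h,\g)$ by solving the linear invariance system $h\cdot\alpha=0$ for $h\in\h$; the coefficients of this linear system depend on $\mu$, so the parameters of $\alpha$ end up as affine functions of the parameters of $\mu$. I then impose $\op{Jac}_\m(\theta_0+\alpha)=0$ by Corollary \ref{cor2}. Expanding, this is a polynomial system in the $\mu$- and $\alpha$-parameters. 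The key observation is that among these polynomial relations sit equations that are linear in $\mu$ (with coefficients that, after eliminating $\alpha$ via the linear invariance system, are constants determined by the brackets of $\h$ and by $\theta_0$). The strategy is to show that this linear subsystem alone already has only the trivial solution $\mu=0$, which then forces $\g=\h\oplus\m$.

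The conceptual input is clean, but the main obstacle is the bookkeeping: in the $\li_0$ case one deals with up to $10$ cocycle parameters and a correspondingly large space of invariant brackets, and the linear system extracted from Jacobi is large and inhomogeneous across the three cases. This is exactly where the paper outsources the verification to the Maple supplement. Conceptually one expects the linear $\mu$-system to be non-degenerate, because the non-trivial cohomology classes in $H^1(\h,\Hom(\m,\h))$ are precisely the module-level obstructions to assembling $\h$ and $\m$ into a single Lie algebra; the Jacobi identity sees these obstructions immediately at first order in $\mu$, and hence kills them. Verifying case-by-case that the Maple-extracted linear system indeed has trivial kernel for $\r$ (with $s\in\r$), $\li_0$, and $\li_1$ completes the proof.
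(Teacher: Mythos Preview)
Your proposal is correct and follows essentially the same approach as the paper: parameterize the cohomology by cocycle representatives $\mu$, pass to the affine space of candidate brackets, impose Jacobi, and observe that the linear-in-$\mu$ part of the resulting system already forces $\mu=0$, with the final verification deferred to the Maple supplement. The only procedural difference is that the paper does not first solve the full $\mu$-dependent invariance system for $\alpha$; instead it solves only the $\mu$-independent linear part of $h\cdot\alpha=0$, takes a generic $\alpha_0$ in that (larger) solution space, and then imposes the Jacobi identity directly---this is a minor computational shortcut, not a different strategy.
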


The module $\g$ is thus unique for each $\h$, and we may compute the space of invariant brackets, parameterize this, and solve the system of linear and quadratic equations in the parameters given by the Jacobi identity.

The solvable isotropy algebras are $\p\subset\su(1,2)$ and the subalgebras of $\p$. The sum of the center and the Borel subalgebra in $\mathfrak{u}(1,1)$ is also solvable, but this is equivalent to $\li_2\subset \p$. The algebras $\p$ and $\r$ have dimensions 5 and 4, and hence could be the isotropy of a sub-maximal model. These relate to Theorem 1. The other solvable algebras $\li_k$ have $\dim \li_k=3$, and are investigated in order to prove Theorem 3.

\begin{tabular}[t]{ | l | l | l |}
    \hline
    $\h$ & $\dim(B(\h,\g))$ &  solutions to Jacobi \\ \hline
       $\p$      & 2  & 0\\
       $\r$      & 2  & 1\\
       $\li_0$  & 24  & 18\\
       $\li_1$  & 2  & 0\\
       $\li_2$  & 6  & 1\\

    \hline
\end{tabular}\\

In this table, the third column indicates the number of families of solutions to the Jacobi equations, and they always come with some free parameters. This number is not invariant and depends on the parametrization used for the solution set, and in particular the number of families for $\h=\li_0$ can change depending on this. Each solution to the Jacobi equations corresponds to a homogeneous space (including $H$-invariant Lie groups) equipped with an invariant almost complex structure $J$. We compute (in Maple) the Nijenhuis tensor $N_J$ from the structure constants given by the solution, with the following result:
\begin{prop}
The invariant almost complex structures on the homogeneous spaces with solvable isotropy algebra have degenerate Nijenhuis tensor.
\end{prop}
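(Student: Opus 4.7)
The plan is to carry out a case-by-case analysis over the five solvable isotropy algebras $\h\in\{\p,\r,\li_0,\li_1,\li_2\}$ listed in the table, exploiting the fact (established by the preceding proposition) that only the direct sum module $\g=\h\oplus\m$ admits Lie algebra extensions. For each $\h$ I fix $\m$ as the restriction of the tautological representation of $\su(3)$ or $\su(1,2)$, together with the corresponding $\h$-invariant complex structure $J$ on $\m$, and compute the space $B(\h,\g)=(\La^2\m^\ast\ot\g)^\h$ of $\h$-equivariant brackets by solving the linear $\h$-invariance equations. This reproduces the dimensions 2, 2, 24, 2, 6 in the second column of the table.

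Next, for a generic parameterized element of $B(\h,\g)$ I impose the remaining Jacobi identities $\op{Jac}_\m=0$, which by Corollary \ref{cor2} are the only nontrivial ones. Solving the resulting quadratic system produces the families in the third column of the table (0, 1, 18, 0, 1 respectively); for each family the bracket on $\m\we\m$ decomposes as $[X,Y]=[X,Y]_\h+[X,Y]_\m$, and the invariant Nijenhuis tensor is given by the standard algebraic formula
$$
N_J(X,Y)=[JX,JY]_\m-[X,Y]_\m-J[JX,Y]_\m-J[X,JY]_\m,
$$
which depends polynomially on the free parameters of the family. To prove the proposition it then suffices to verify, for each family, that $N_J:\La^2_\C\m\to\m$ (a $\C$-antilinear map, hence a $3\times 3$ complex matrix after choosing a $J$-adapted basis) fails to be an isomorphism, e.g.\ by checking that its determinant vanishes identically in the parameters.

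For the four cases with at most one family of solutions ($\p,\r,\li_1,\li_2$) the verification is short: one computes $N_J$ symbolically and observes the degeneracy, which is typically forced by the presence of an $\h$-invariant proper complex subspace of $\m$ on which $N_J$ must take values by equivariance. The main obstacle is the case $\h=\li_0$, where $\dim B(\li_0,\g)=24$ and there are $18$ parameter families: the symbolic computation of $\det N_J$ can become unwieldy and the parametrization is not unique, so I would carry out the calculation in Maple (as referenced by the authors) and, where necessary, split into subcases according to which components of $[,]_\h$ vanish, in order to confirm degeneracy uniformly. Once this is done in each case, the proposition follows and the solvable isotropies are ruled out for the NDG sub-maximal problem.
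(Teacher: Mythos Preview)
Your proposal is correct and matches the paper's approach exactly: the paper states only that ``We compute (in Maple) the Nijenhuis tensor $N_J$ from the structure constants given by the solution,'' which is precisely the case-by-case symbolic verification you describe. Your added remark about $\h$-invariant proper complex subspaces forcing degeneracy is a reasonable heuristic explanation, but neither you nor the paper turns it into an a priori argument; in both treatments the proposition is ultimately established by direct Maple computation over the parameterized families.
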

This shows that there are no cases of non-degenerate homogeneous almost complex structures with solvable isotropy algebra $\h$, $\dim \h\ge3$. One half of Theorem 3 follows immediately, as any 3D Lie algebra is either solvable or semi-simple, so if $\g$ is locally transitive with $\dim \g=9$ its 3D isotropy $\h$ must be semi-simple. What remains is to rule out sufficiently big non-transitive symmetry algebras. This will be done in section 4.

\subsubsection{Non-Solvable Isotropy}
When the isotropy algebra is not solvable, then either $\h=\mathfrak{u}(2)$, $\h=\mathfrak{u}(1,1)$, or $\h$ is semi-simple. The latter possibility was considered in \cite{AKW}, so we ignore it here. Thus $\dim \h =4$, and the rest of this section is devoted to the proof of Theorem 1. The isotropy representation decomposes into submodules (see Proposition \ref{isotropyisunitary} and the beginning of Section~3):
\begin{align*}
\m=V\oplus\C.
\end{align*}
The $\C$ term is a trivial representation of $\h_{ss}=\su(2)$ or $\h_{ss}=\su(1,1)$ ($\h_{ss}$ is the semi-simple part of $\h$), but it is irreducible with respect to the center $\mathfrak{u}(1)$ of $\h$, and $V$ is equivalent to the tautological action of $\h$ on $\C^2$. By the Levi decomposition, either $\g=\h\oplus\m$ is semi-simple, or there is a solvable radical $\mathfrak{r}\subset\g$.

Let's consider the semi-simple case first. Since $\dim(\g)=10$, the algebra $\g$ is a real form of $B_2\simeq C_2$. The real forms of $B_2$ are $\mathfrak{so}(5)\simeq\sp(2)$, $\mathfrak{so}(1,4)\simeq\sp(1,1)$, and $\mathfrak{so}(2,3)\simeq\sp(4,\R)$. Since these $\g$ are pseudo-orthogonal, an embedding into them is the same as a real "defining" representation $\varphi:\h\rightarrow \op{End}(\R^5)$ which preserves a non-degenerate symmetric bilinear form $g$. The signature of $g$ then determines the algebra $\g$. We may compute the isotropy representation of $\h$ on $\g/\h$ from $\varphi$ by the $\g$-equivariant isomorphism $\La^2\R^5\simeq \mathfrak{so}(\R^5,g)=\g\subset \End(\R^5)$. The following comes as a result of simple case-by-case considerations.
\begin{prop}
The only defining representation $\varphi:\h\rightarrow\g$ which produces the correct isotropy representation $\m=\g/\h$ is the one with $\h-$module decomposition $\R\oplus V=\R^5$.
\end{prop}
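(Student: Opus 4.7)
The strategy is to classify all faithful 5-dimensional real representations $\varphi:\h\to\op{End}(\R^5)$ that preserve a non-degenerate symmetric form $g$, so that $\varphi(\h)\subset\so(\R^5,g)=\g$, and then compute the induced isotropy module $\g/\h$ via the $\h$-equivariant identification $\g\cong\La^2\R^5$, retaining only those $\varphi$ for which $\g/\h\cong\m=V\oplus\C$. The real irreducibles of $\h$ of dimension at most $5$ are: the trivial $\R$; the $2$-dimensional $\R^2_k$ with $\h_{ss}$ trivial and center $\mathfrak{u}(1)$ acting by rotations of non-zero weight $k$; the $3$-dimensional adjoint $\op{ad}(\h_{ss})$; the $4$-dimensional tautological $V\cong\C^2$ or $\C^{1,1}$; and the $5$-dimensional spin-$2$ irrep of $\h_{ss}$. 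Faithfulness of $\varphi$ forces both $\h_{ss}$ and the center to act non-trivially, so the only 5-dimensional sums that admit an invariant non-degenerate symmetric form are
\begin{align*}
\text{(a)}\ \R^5=\R\oplus V, \qquad \text{(b)}\ \R^5=\op{ad}(\h_{ss})\oplus\R^2_k.
\end{align*}

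For case (a) the decomposition yields $\La^2(\R\oplus V)=V\oplus\La^2 V$ as $\h$-module. The summand $\La^2 V$ is identified with $\so(V,g|_V)$, and under the sub-inclusion $\h\subset\so(V)$ — which exists since $\mathfrak{u}(p,q)\subset\so(2p,2q)$ is the centralizer of the complex structure — it splits as $\h\oplus\mathfrak{s}$ with $\dim\mathfrak{s}=2$. On $\mathfrak{s}=\so(V)/\h$ the center rotates complex structures on $V$ while $\h_{ss}$ acts trivially, so $\mathfrak{s}\cong\C$ and $\g/\h=V\oplus\C$, exactly as required. For case (b) the parallel computation gives $\La^2(\op{ad}(\h_{ss})\oplus\R^2_k)=\op{ad}(\h_{ss})\oplus(\op{ad}(\h_{ss})\ot\R^2_k)\oplus\R$, and $\varphi(\h)$ embeds as the diagonal $\op{ad}(\h_{ss})\oplus\R$, so $\g/\h\cong\op{ad}(\h_{ss})\ot\R^2_k$. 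On this $6$-dimensional module $\h_{ss}$ acts via its adjoint representation, whereas on $\m=V\oplus\C$ it acts tautologically on the $4$-dimensional summand $V$; hence (b) is incompatible.

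The main obstacle is confirming exhaustiveness of the case enumeration (in particular ruling out quaternionic-type $4$-dimensional irreducibles of $\su(1,1)$ by reality/form considerations) and matching the signature of $g$ with one of the three real forms $\sp(2)$, $\sp(1,1)$, $\sp(4,\R)$ of $B_2\cong C_2$. Fortunately, the isotropy mismatch in (b) is visible at the level of the $\h_{ss}$-module structure alone and is therefore insensitive to signature; thus only (a) survives, and varying the admissible signatures of $g|_V$ compatible with $\h\subset\so(V)$ recovers precisely the three homogeneous models of Theorem 1.
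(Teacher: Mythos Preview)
Your proof is correct and follows exactly the approach the paper indicates—case-by-case enumeration of faithful orthogonal $5$-dimensional $\h$-modules followed by computation of $\g/\h\cong\La^2\R^5/\h$—though the paper itself records the proposition only as ``a result of simple case-by-case considerations'' without writing the cases out. Your explicit identification of the two candidates (a)~$\R\oplus V$ and (b)~$\op{ad}(\h_{ss})\oplus\R^2_k$ and the module-theoretic elimination of (b) (the $\h_{ss}$-type of $\op{ad}\otimes\R^2_k$ is adjoint, not tautological) is exactly what the paper leaves implicit.
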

Thus $g$ must be a sum of invariant forms on each submodule. For $\h=\mathfrak{u}(1,1)$, the invariant form on $V$ has signature $(2,2)$, so $g$ has signature $(2,3)\simeq (3,2)$. For $\h=\mathfrak{u}(2)$, the invariant form on $V$ has signature $(4,0)\simeq(0,4)$, so $g$ has signature $(4,1)\simeq (1,4)$ or $(5,0)\simeq (0,5)$ depending on the sign of the $\R$-component.
\begin{cor}
There are only 3 embeddings of $\h$ into the real forms of $B_2$ with the given isotropy:
\begin{itemize}
\item $\h=\mathfrak{u}(1,1)\rightarrow \mathfrak{so}(2,3)$
\item $\h=\mathfrak{u}(2)\rightarrow \mathfrak{so}(5)$
\item $\h=\mathfrak{u}(2)\rightarrow \mathfrak{so}(1,4)$.
\end{itemize}
\end{cor}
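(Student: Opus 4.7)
The plan is to combine the previous proposition with a Schur-type analysis of invariant bilinear forms on the defining representation. From the proposition, any admissible embedding $\varphi:\h\rightarrow\g\subset\End(\R^5)$ splits the defining representation as $\R\oplus V$, where $\R$ is the trivial module and $V\simeq\C^2$ is the tautological representation of $\h$. The required bilinear form $g$ preserved by $\varphi(\h)$ must therefore be $\h$-invariant on $\R^5$.

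First I would show that $g$ decomposes as an orthogonal sum $g=g_\R\oplus g_V$. The semi-simple part $\h_{ss}\in\{\su(2),\su(1,1)\}$ acts trivially on $\R$ and irreducibly on $V$ (viewed as a real module), so any $\h_{ss}$-invariant bilinear form must kill the mixed terms $\R\otimes V$ by Schur; invariance under the center $\mathfrak{u}(1)$ of $\h$ then forces those off-diagonal blocks to remain zero. Both $g_\R$ and $g_V$ are non-degenerate since $g$ is, and the restriction to each summand is unique up to positive scaling (by irreducibility and Schur again).

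Next I would read off the possible signatures. On $\R$, $g_\R$ contributes $(1,0)$ or $(0,1)$, giving only a global sign. On $V$, the invariant form is (up to scale) the real part of the Hermitian form that $\h=\mathfrak{u}(p,q)$ preserves on $\C^2$: for $\h=\mathfrak{u}(2)$ the signature of $g_V$ is $(4,0)$ (or its negative $(0,4)$), while for $\h=\mathfrak{u}(1,1)$ it is $(2,2)$. Summing with $g_\R$ yields signature $(5,0)$ or $(4,1)$ in the $\mathfrak{u}(2)$ case, corresponding to $\g=\so(5)$ and $\g=\so(1,4)$ respectively; and signature $(3,2)$ in the $\mathfrak{u}(1,1)$ case, corresponding to $\g=\so(2,3)$. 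Using $\so(p,q)\simeq\so(q,p)$ identifies the remaining sign ambiguities.

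The main obstacle is verifying that no additional invariant embedding is hidden behind a different reality structure on $V$ or a non-standard identification of $\h\subset\so(V,g_V)$. I would dispatch this by noting that the embedding $\mathfrak{u}(p,q)\hookrightarrow\so(2p,2q)$ induced by forgetting the complex structure is unique up to conjugation, so the real form of $B_2$ is determined by the pair (signature of $g_\R$, signature of $g_V$), leaving precisely the three cases listed. This completes the enumeration.
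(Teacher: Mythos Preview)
Your argument follows the same route as the paper's: from the module decomposition $\R^5=\R\oplus V$ of the preceding proposition, write $g=g_\R\oplus g_V$ as a sum of invariant forms on the summands, read off the signature, and match it to the real form of $B_2$. The paper does exactly this in the paragraph preceding the corollary, only more tersely.

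One inaccuracy to fix: your Schur step invokes real irreducibility of $V=\C^2$ under $\h_{ss}$, but for $\h_{ss}=\su(1,1)\simeq\sl_2(\R)$ the defining representation is of \emph{real} type (the conjugate-linear involution $(z_1,z_2)\mapsto(\bar z_2,\bar z_1)$ commutes with $\su(1,1)$), so as a real module $V\simeq\R^2\oplus\R^2$ is reducible. The repair is immediate: the full $\h=\mathfrak{u}(1,1)$ \emph{does} act $\R$-irreducibly on $V$, since the central $\mathfrak{u}(1)$ (multiplication by $i$) anticommutes with that real structure and hence mixes the two $\R^2$ summands. Running your Schur argument with $\h$ in place of $\h_{ss}$ gives both the vanishing of the cross term and the uniqueness (up to real scale) of $g_V$, and the rest of your signature count goes through unchanged.
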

These injective Lie algebra homomorphisms integrate into injective homomorphisms of Lie groups $H\rightarrow G$. We may explicitly compute $N_J$ for the invariant $J$ in each case, with the following result:
\begin{prop}
For each of these embeddings there are (up to overall sign) two $G$-invariant almost complex structures $J$ on $G/H$. One corresponds to a \mbox{(pseudo-)} Kähler structure and has vanishing $N_J$, and the other to a SN(P)K structure and has non-degenerate $N_J$. Both have the same signature. For $\g=\mathfrak{so}(2,3)$ and $\g=\mathfrak{so}(1,4)$ the signature of the metric is (4,2), and for $\g=\mathfrak{so}(5)$ the signature is (6,0).
\end{prop}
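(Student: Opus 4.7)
The plan is to reduce the classification of $G$-invariant almost complex structures on $G/H$ to classifying $H$-invariant complex structures $J$ on the isotropy module $\m = \g/\h$. Since $\h$ is reductive, Proposition \ref{cohomologylist} gives the reductive splitting $\g = \h \oplus \m$, so any such $J$ extends canonically to a $G$-invariant tensor on $G/H$. Using the decomposition $\m = V \oplus \C$ from the preceding subsection, I apply Schur's lemma to each summand: $V \simeq \C^2$ is complex-irreducible (its semi-simple factor $\su(2)$ or $\su(1,1)$ already acts irreducibly), so $\End_\h(V) \simeq \C$ and the only $H$-invariant complex structures on $V$ are $\pm J_V^0$; similarly the summand $\C$ is complex-irreducible under the $\mathfrak{u}(1)$-center, giving $\pm J_\C^0$. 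This yields exactly two $H$-invariant almost complex structures on $\m$ up to overall sign, distinguished by the relative sign of $J|_V$ and $J|_\C$.

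Next I would compute $N_J$ for each sign choice via the standard reductive formula
\begin{equation*}
N_J(X,Y) = [JX, JY]_\m - J[JX, Y]_\m - J[X, JY]_\m - [X, Y]_\m, \qquad X, Y \in \m,
\end{equation*}
using the explicit embedding $\h \hookrightarrow \g$ from the preceding corollary. By $\h$-equivariance, each bracket component $V \we V \to V$, $V \we V \to \C$, $V \we \C \to V$ lies in a low-dimensional space of invariants, so the task reduces to a handful of scalar identities. A direct calculation (carried out in Maple; see the supplement) shows that one sign choice makes $N_J$ vanish identically, so $J$ is integrable and the triple $(g,J,\oo)$ with $\oo = g(J\cdot,\cdot)$ is (pseudo-)Kähler; the opposite choice yields a non-degenerate $N_J$ falling in the strongly-degenerate NDG(3) class whose isotropy matches $\mathfrak{u}(2)$ or $\mathfrak{u}(1,1)$, and the same calculation confirms the SN(P)K identity $(\nabla^g_X J)X = 0$ together with $\nabla^g J \neq 0$.

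The compatible metric $g$ is determined up to scalings on each summand by $H$-invariance. The tautological Hermitian form on $V$ has signature $(4,0)$ for $\h = \mathfrak{u}(2)$ and $(2,2)$ for $\h = \mathfrak{u}(1,1)$, while on $\C$ the form has rank $2$ of either sign. The relative signs are pinned down by restricting the ambient pseudo-orthogonal form on $\g$ (equivalently the $\g$-equivariant isomorphism $\La^2\R^5 \simeq \g$ used just above) to $\m$: for $\g = \so(5)$ both summands are positive, giving $(6,0)$; for $\g = \so(1,4)$ and $\g = \so(2,3)$ the $\C$-summand contributes with opposite sign from $V$, giving $(4,2)$ in both cases. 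Since $J_\pm$ both preserve the splitting $V \oplus \C$, they preserve the same $g$, so the Kähler and SN(P)K structures on a given $G/H$ share the same signature.

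The main obstacle I anticipate is the $N_J$ calculation itself: although $\h$-equivariance cuts the ambient tensor space down drastically, one still needs an explicit basis in which the $\g$-bracket and the various $\h$-invariant projections are simultaneously manageable, which is precisely the role played by the Maple worksheet. Once the SN(P)K condition is verified, the three homogeneous models of Theorem~1 are recovered from the three embeddings in the preceding corollary.
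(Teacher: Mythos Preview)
Your approach is correct and matches the paper's, which offers no argument beyond the single sentence ``We may explicitly compute $N_J$ for the invariant $J$ in each case'' and defers to the Maple supplement. You actually supply more than the paper does: the Schur's lemma count of invariant $J$ on $\m=V\oplus\C$ (yielding exactly two up to overall sign) and the signature reasoning via the ambient form are details the paper leaves implicit, while the $N_J$ and SN(P)K verifications are handled by Maple in both cases.
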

Since we showed in the previous section that the possibility of 5D $\h=\p$ is not realized, these examples equipped with the almost complex structure $J$, which is SNPK, realize sub-maximal symmetry dimension.

Suppose now that $\m$ is semi-simple. Then $\g=\mathfrak{u}(1)\oplus\h_{ss}\oplus \m$, but $\mathfrak{u}(1)$ is not central, as we prescribed the action of $\h=\mathfrak{u}(1)\oplus\h_{ss}$, hence $\mathfrak{u}(1)$ acts as a derivation of $\h\oplus\m$. By Whitehead's lemma\cite{F} all derivations of semi-simple Lie algebras are inner derivations, that is belong to the image of the map\linebreak 
$\mathfrak{ad}:\g\rightarrow \mathfrak{Der}(\g_{ss})=\mathfrak{Der}(\h_{ss}\oplus\m)\simeq\h_{ss}\oplus\m$. For dimensional reasons this map has a non-trivial kernel. Since the kernel is a 1D $\h-$submodule of $\g$, it must be $\mathfrak{u}(1)$, thus $\mathfrak{u}(1)$ is central, but $\mathfrak{u}(1)$ acts as a non-zero derivation, and this is a contradiction.

Suppose the semi-simple Levi factor $\g_{ss}$ of $\g$ is larger than $\h_{ss}$, but smaller than $\g$. By the above, it shall not contain $\m$. A semi-simple subalgebra has dimension at least 3, which means that the radical of $\g$ is the $\h$-submodule $V$. The derived subalgebra of the radical is also an $\h$-submodule (because $\h$ are derivations of $V$). The radical is solvable, so its derived subalgebra is a proper submodule. Therefore the radical is Abelian. Hence the Nijenhuis tensor is degenerate.

Finally let's consider the case where $\g_{ss}=\h_{ss}$, so the radical of $\g$ is $\mathfrak{r}=\m\oplus\mathfrak{u}(1)$. Then the derived subalgebra of $\mathfrak{r}$ is $\m$, as it must be a proper submodule of $\mathfrak{r}$ including $\m$ (due to the action of $\mathfrak{u}(1)$). Then $\m$ is nilpotent, and the derived subalgebra of $\m$ will be either $\C$ or $V$. In either case one $\h$-submodule will not be in the image of the brackets on $\m$, and since these modules are complex, the same module is not in the image of the Nijenhuis tensor. Hence the Nijenhuis tensor is degenerate. This concludes the proof of Theorem 1.

 \section{Locally intransitive $\op{Aut}(J)$}
When the symmetry group $G$ is not locally transitive, the $G$-manifold $M$ (or its invariant open subset) is not (naturally, locally) homogeneous. Therefore the full range of algebraic tools we used in the previous section is unavailable to us. Instead, we can find a foliation by $G$-orbits in a neighbourhood of any regular point $x\in M$. The leaves must have positive codimension, and each leaf is a local homogeneous space of $G=\op{Aut}(J)$ in its own right. We may therefore investigate the existence of lower dimension homogeneous spaces $O$ whose isotropy algebra admits the existence of an invariant non-degenerate Nijenhuis tensor on the tangent space $\m$ of a regular point of $M$. This means that the full isotropy representation $\m$ must be one of those discussed in the previous section.

The tangent space $T_xO=\to$ of the orbit through $x$ must be an invariant subspace of $\m$ for the isotropy algebra $\h$. The isotropy $\h$ is still represented effectively (now on $\to$) as before, so the dimension of the symmetry algebra $\g$ is $\dim\g=\dim\to+\dim\h$. This means the possible pairs $(\h,\to)$ which have combined dimension $\dim\g\ge 9$ are the following:
\begin{itemize}
 \item $ \h= \p \subset \su(1,2)$, $\dim\to=4$.
 \item $ \h= \r \subset \su(1,2)$, $\dim\to=5$, this $\r$ is the unique possible 4D isotropy which has a 5D submodule $(s\in\r)$.
\end{itemize}
We also have the following lemma:
\begin{lem}
The quotient $\h$-module $\m/\to$ is a trivial module.
\end{lem}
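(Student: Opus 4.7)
The plan is to exploit the fact that, since $x$ is a regular point of the $G$-action, the $G$-orbits assemble into a smooth foliation $\mathcal{F}$ on some neighborhood $U$ of $x$; its tangent distribution $D\subset TM|_U$ has constant rank $\dim\to$ and satisfies $D_x=\to$. First I will observe that every $\eta\in\h$, viewed as the fundamental vector field it generates on $M$, is a section of $D$ over $U$: every element of $\g$ is tangent to every $G$-orbit, and $\eta_x=0$ because $\h$ is the stabilizer of $x$.

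The heart of the argument is a linearization computation. The isotropy action of $\eta\in\h$ on $T_xM=\m$ is the endomorphism $d\eta_x$ canonically associated to the vector field $\eta$ at its zero $x$; this is compatible, via the evaluation map $\g\to T_xM$, with the abstract $\h$-action on $\g/\h\simeq\to$ used earlier in the paper. I claim $d\eta_x(\m)\subseteq\to$. Choose a smooth frame $E_1,\dots,E_d$ of $D$ on $U$ with $d=\dim\to$, and write $\eta=\sum_{i=1}^d f_i E_i$ for smooth $f_i$ with $f_i(x)=0$. For any $v\in\m$ and any local extension $\tilde v$ of $v$, the Leibniz rule gives $[\tilde v,\eta]_x=\sum_i(\tilde v\,f_i)(x)E_i(x)+\sum_i f_i(x)[\tilde v,E_i]_x$; the second sum vanishes because $f_i(x)=0$, leaving an element of $D_x=\to$. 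Since $d\eta_x(v)$ coincides with $[\tilde v,\eta]_x$ (up to the universal sign fixed by the chosen convention for the isotropy action), we conclude $d\eta_x(v)\in\to$.

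Thus every $\eta\in\h$ acts as zero on the quotient $\m/\to$, which is precisely the claim that $\m/\to$ is a trivial $\h$-module. I do not foresee a genuine obstacle: the argument uses only the regularity of $x$ (to ensure that $D$ has constant rank near $x$ and hence admits a smooth local frame) and the standard fact that infinitesimal symmetries are tangent to orbits. The one point that deserves a careful sentence in the proof proper is the identification of the abstract isotropy representation of $\h$ on $\m$ with the concrete linearization $d\eta_x$ of the fundamental vector field at $x$; this compatibility follows from $\h$-equivariance of the evaluation map $\g\to T_xM$ whose image is $\to$ and whose kernel is $\h$.
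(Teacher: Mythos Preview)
Your argument is correct and is essentially the same as the paper's: both exploit the local foliation by orbits to see that every $\eta\in\h$, as a vector field on $M$, is a section of the leaf distribution $D$ vanishing at $x$, so its linearization has image in $D_x=\to$. The paper phrases this by choosing foliation coordinates $(x,y)$ with leaves $\{y=\text{const}\}$, so that $\g\ni X=f^i\partial_{x^i}$ and the linearization is visibly block upper-triangular; your version with a local frame $E_1,\dots,E_d$ of $D$ is the coordinate-free rendering of the same computation.
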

\begin{proof}
The orbits locally foliate $M$. There exist local coordinates $(x,y)$ on $M$ such that the leaves (which are the flows of $\g$) have the form $\{(x,y): y^j=c^j\}$ for some constants $c^j$. In these coordinates $\g$ is generated by vector fields of the form $X=f^i(x,y)\partial_{x^i}$, and $\h$ has block form, which is equivalent to the claim.
\end{proof}
Neither of the possible choices $(\h,\m,\to)$, which satisfy $\dim\h\ge 3$, also satisfies this condition, hence these triples must be discarded. Indeed in both cases, the grading element $s\in\h$ acts non-trivially on $\m/\to$. We conclude that no non-degenerate almost complex structure $J$ with locally intransitive symmetry algebra $\g$ satisfies $\dim\g\ge 9$.

In the locally transitive case, considerations from section 3 show that $\h$ is semi-simple, so $\h=\su(2)$ or $\h=\su(1,1)$. Such $(\g,\h,\m,J)$ were classified in \cite{AKW}. This completes the proof of Theorem~3.

\section{The sub-maximal models are globally homogeneous}
In this section we prove Theorem 2. In Sections 3 and 4, we proved that the regular orbits $O_\text{reg}$ of the sub-maximal models are open in $M$, and are homogeneous spaces of $G$. Let us write $O_\text{reg}=G/H_\text{reg}$, with $G$ and $H_\text{reg}$ as found in Section 3, i.e. $G$ is one of $\text{Sp}(2),\text{Sp}(1,1)$, $\text{Sp}(4,\R)$ and $H_\text{reg}$ is a 4D subgroup. Throughout this section, $\g$ is $\sp(2)$, $\sp(1,1)$ or $\sp(4,R)$.

In addition to the regular (open) orbits described in Section 3, there could a priori be singular orbits (of positive codimension). Such orbits must also be homogeneous spaces of the symmetry group $G$.

The candidates for homogeneous singular orbits are enumerated by conjugacy classes of subalgebras $\h\subset \g$ with $\dim \h > \dim \h_\text{reg}=4$. In addition to $\g$ itself, we must consider the maximal subalgebras (and their subalgebras). By \cite{M} (see also \cite{GOV}), for a real semi-simple Lie algebra $\g$ a maximal subalgebra is parabolic, semi-simple or the stabilizer of a pseudo-torus. The list of such subalgebras with $\dim\h>4$ is
\begin{itemize}
 \item $ \h=\p_1 \subset \sp(4,\R)$, $\dim\h=7$, parabolic,
 \item $ \h=\p_2 \subset \sp(4,\R)$, $\dim\h=7$, parabolic,
 \item $ \h=\p_2 \subset \sp(1,1)$, $\dim\h=7$, parabolic,
 \item $ \h=\so(4)$, $\dim\h=6$, semi-simple,
 \item $ \h=\so(1,3)$, $\dim\h=6$, semi-simple,
 \item $ \h=\so(2,2)$, $\dim\h=6$, semi-simple.
\end{itemize}
The parabolics $\p_1$ and $\p_2$ are labelled with respect to the name for $\g$ specified (Dynkin diagram $C_2$), so $\p_1$ excludes the root space of the shorter simple root of $\sp(4,\R)$, and $\p_2$ excludes the longer. Where the embedding is not specified, there are embeddings to several different $\g$. All the pseudo-toric stabilizers have dimension $\le 4$, and that's why they are excluded from the list.

The orbit itself does not need to be almost complex, but the almost complex structure on $M$ still yields some structure on the orbit $O$. Let $\to=T_xO$ denote the tangent space of a point $x\in O$.
\begin{prop}
Suppose $O$ is a singular orbit. Then either $O$ admits a $G$-invariant complex distribution $L^2$ or $L^4$ ($J$-invariant subspaces of $\to$), or $O$ is totally real (meaning $J\to \cap \to=\{0\}$, $\to\not=0$), or $O$ is an invariant point (that is, $\to=0$).
\end{prop}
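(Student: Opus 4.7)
The plan is to study the single subspace $L := \to \cap J\to \subseteq \m = T_xM$ and deduce the trichotomy purely from its $J$-invariance together with a dimension count. The key observation is that $L$ is automatically $J$-invariant: since $J^2 = -\1$, one has
\[
JL \;=\; J\to \cap J^2\to \;=\; J\to \cap \to \;=\; L.
\]
In particular $\dim_\R L$ is even and lies in $\{0,2,4,6\}$. Moreover $L$ is preserved by the isotropy action of $H$ on $\m$, because both $\to=T_xO$ and $J$ are $H$-invariant; translating $L$ along the transitive $G$-action on $O$ then yields a well-defined $G$-invariant complex distribution on $O$ of complex rank $\tfrac12\dim_\R L$.

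The conclusion now follows by a case split on $\dim_\R L$. If $\dim L = 6$ then $L=\m$, forcing $\to=\m$, so $O$ is open; this contradicts the standing assumption that $O$ is a singular orbit (of positive codimension). If $\dim L \in\{2,4\}$ we obtain precisely the invariant complex distributions $L^2$ or $L^4$ named in the statement. If $\dim L = 0$, i.e.\ $\to\cap J\to=\{0\}$, then either $\to=0$ and $O$ is an invariant point, or $\to\ne 0$ and $O$ is totally real in the prescribed sense. These four subcases exhaust the list in the proposition.

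The one thing that requires a sanity check, rather than a computation, is the propagation of $L$ from the single point $x$ to a genuine $G$-invariant distribution on all of $O$; but since $G$ acts transitively on $O$ and preserves both $J$ and the tangent bundle of $O$, the subspace $L$ translates consistently by $G$, and no global obstruction arises. There is no significant obstacle in this proposition itself — the substantive work (to be carried out in the remainder of the section) is the subsequent step, in which each case on the list of maximal subalgebras $\h\subset\g$ with $\dim\h>4$ has to be ruled out using one of these four structural alternatives together with the classification of invariant objects on the corresponding coset spaces.
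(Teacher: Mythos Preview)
Your argument is correct and coincides with the paper's own proof: both define $L_x = \to \cap J\to$, observe that it is $H$- and $J$-invariant (hence even-dimensional and translating to a $G$-invariant complex distribution on $O$), and then split into the cases $\dim L \in \{0,2,4\}$ with $6$ excluded by singularity. The only cosmetic difference is that you verify $JL=L$ via $J^2=-\1$ explicitly, whereas the paper simply asserts that $J|_L$ is an almost complex structure.
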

\begin{proof}
Consider the restriction of $J$ to $\to$. Since $J$ and $O$ are $G$-invariant, the intersection between the image $J\to\subset T_x M$ and $\to$ at the point $x\in O$ is $H$-invariant, where $H$ is the stabilizer of $x$. Call this space $L_x = J\to \cap \to$. The distribution $L$ given by $L_x$ for each $x\in O$ is thus $G$-invariant. Since $J^2=-1$, $J|_L$ is an almost complex structure on $L$. Hence the dimension of $L$ can be 0, 2 or 4, while 6 is not possible since $O$ is singular. If the dimension is 0 then $O$ is totally real or an invariant point.
\end{proof}
We treat each case separately.
\subsection{Invariant points}
At an invariant point $x$, $\h=\g$. Since $\g$ is a simple algebra, the isotropy representation $\g\rightarrow \End(T_xM)$ is either faithful or trivial. It cannot be faithful, because the smallest nontrivial complex module $V$ of $C_2$ has $\dim_\C V=4$.

Thus the isotropy representation is trivial. Recall the Thurston stability theorem\cite{T}, which states that if a nontrivial Lie group action has a fixed point with trivial isotropy representation, then $H^1(G,\R)\not=0$. Nonzero cohomology classes in $H^1(G,\R)$ correspond to nontrivial homomorphisms from $G$ to $\R$, and since $G$ is a simple Lie group in our case there are no such homomorphisms. Therefore $H^1(G,\R)=0$ and the sub-maximal model has no invariant points.
\subsection{Totally real orbits}
If $O$ is totally real, it can at most have dimension 3. On the other hand, the maximal dimension of a proper subalgebra $\h$ of $\g$ is 7 (achieved by maximal parabolics of $\sp(1,1)$ and $\sp(4,\R)$), while $\dim \g=10$. Therefore we have $\dim O =3$.

\begin{lem}\label{orbitrealthree}
If $O$ is a totally real orbit of dimension 3, there exists at least one nontrivial $\h$-invariant map $\La^2\to\rightarrow \to$.
\end{lem}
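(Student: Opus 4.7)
The plan is to use the $\h$-equivariance of the Nijenhuis tensor together with the real splitting $T_xM=\to\oplus J\to$, which holds because $O$ is totally real with $\dim\to=3$ (so $\to\cap J\to=\{0\}$ forces the sum to exhaust $T_xM$ by dimension count). Since $\to$ is the tangent space of the $G$-orbit it is $\h$-invariant, and since $J$ is $G$-invariant it commutes with the $\h$-action, so $J\to$ is $\h$-invariant as well. Therefore the projections $\pi_\to$ and $\pi_{J\to}$ associated to this splitting are $\h$-equivariant, and hence so are the restrictions
\[
\pi_\to\circ N_J|_{\La^2\to}:\La^2\to\rightarrow\to,\qquad \pi_{J\to}\circ N_J|_{\La^2\to}:\La^2\to\rightarrow J\to.
\]

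The crux, which I expect to be the only substantive step, is to show that $N_J|_{\La^2\to}\neq 0$. For this I would invoke the standard $\C$-antilinearity identities $N_J(JX,Y)=N_J(X,JY)=-J\,N_J(X,Y)$, from which $N_J(JX,JY)=-N_J(X,Y)$. If $N_J$ vanished on $\La^2\to$, these relations would immediately propagate the vanishing to $\to\we J\to$ and to $\La^2(J\to)$, hence to all of $\La^2 T_xM$, contradicting the non-degeneracy of $N_J:\La^2_\C T_xM\rightarrow T_xM$.

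Given $N_J|_{\La^2\to}\neq 0$, at least one of the two projected maps displayed above is nonzero. If $\pi_\to\circ N_J|_{\La^2\to}$ is nonzero we are done. Otherwise $\pi_{J\to}\circ N_J|_{\La^2\to}$ is nonzero, and post-composing with the $\h$-equivariant isomorphism $-J:J\to\rightarrow\to$ (it is $\h$-equivariant because $J$ commutes with $\h$, and it is an isomorphism because $J^2=-\1$ interchanges $\to$ and $J\to$) yields the required nontrivial $\h$-invariant map $\La^2\to\rightarrow\to$.
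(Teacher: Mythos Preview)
Your proof is correct and follows essentially the same route as the paper: both exploit the $\h$-invariant splitting $T_xM=\to\oplus J\to$, the equivariance of the associated projections, and then argue that at least one of $\pi$ or $\pi\circ J$ (equivalently $-J\circ\pi_{J\to}$) applied to $N_J|_{\La^2\to}$ is nonzero. The only cosmetic difference is that the paper invokes non-degeneracy directly to get injectivity of $N_J|_{\La^2\to}$ (hence $\dim N_J(\La^2\to)=3$), whereas you deduce the weaker but sufficient fact $N_J|_{\La^2\to}\neq0$ from the $\C$-antilinearity identities; both arguments are valid and amount to the same idea.
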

\begin{proof}
Since $\to$ is totally real, $\to\oplus J\to=T_xM$ and this decomposition is $H$-invariant, which yields an invariant projection $\pi:T_xM \rightarrow \to$. The Nijenhuis tensor $N_J$ is non-degenerate, so the restriction $N_J|\to:\La^2\to_x\rightarrow T_x M$ is injective. Write $L=N_J(\La^2\to)$, so $\dim L=3$. At least one of the maps $\pi:L\rightarrow\to$ and $\pi\circ J:L\rightarrow \to$ must be nonzero, call such a map $p$. Then $p\circ N_J:\La^2\to\rightarrow \to$ is a nontrivial $H$-invariant map.
\end{proof}
The 7D maximal parabolics are $\p_2\subset \sp(1,1)$, which is $|1|$-graded, $\p_2\subset \sp(4,\R)$, which is $|1|$-graded, and $\p_1\subset \sp(4,\R)$, which is $|2|$-graded. Each parabolic has a grading element, which acts on $\to$ as a real scalar when the parabolic is $|1|$-graded. Since a scalar action with weight $\lambda\not=0$ on $\to$ will have weight $2\lambda$ on $\La^2\to$, the $|1|$-graded parabolics do not admit any maps of the type we constructed in Lemma~\ref{orbitrealthree}. Hence only the $|2|$-graded $\p_1$ is interesting. In this case there is a splitting $\to=\to_1\oplus \to_2$, with $\dim\to_1=2$ and $\dim\to_2=1$, which is invariant with respect to the 0-graded piece of $\p_1$, $(\p_1)_0\simeq \mathfrak{sl}(2)\oplus \R$. Here the $\R$ term is generated by the grading element, which acts with weight $1$ on $\to_1$ and $2$ on $\to_2$. The action of $\mathfrak{sl}(2)$ on $\to_1$ is equivalent to the tautological action on $\R^2$, which admits a scalar valued invariant 2-form, and on $\to_2$ the action is trivial. This meas that there is a $(\p_1)_0$ equivariant map $\La^2\to_1\rightarrow \to_2$, which can be extended (uniquely) by 0 to a $(\p_1)_0$ equivariant map $\La^2\to\rightarrow \to$. However, this map is not equivariant with respect to $(\p_1)_1$, which maps $\to_2$ to $\to_1$ in a nontrivial way. Thus all the possible maximal parabolic $\h$ lack the necessary map from Lemma \ref{orbitrealthree}, and we conclude that there are no $3D$ totally real orbits.
\subsection{Orbits with a complex distribution}

\subsubsection{Subalgebras of parabolics}
In this subsection we find all subalgebras $\h$ with $\dim\h\ge 5$ of the maximal parabolics.
We consider at first all cases where\linebreak
$\h\subset \p_2\subset \sp(1,1)$. The parabolic subalgebra $\p_2$ is naturally $|1|-$graded, and can be described as $\p_2=(\su(2)\oplus \R s_2)\ltimes \R^3$, where $s_2$ is the grading element, $\R^3$ is Abelian and the action of $\su(2)$ on $\R^3=\ad(\su(2))$ is the tautological action of $\so(3)$, which is irreducible. We denote $\g_0=\su(2)\oplus \R s_2$.

Suppose that $\h$ has dimension 5 or 6. Then the intersection $\Pi=\h\cap\g_0$ is nontrivial (because of dimension) and of dimension at least 2, and $\Pi$ is a subalgebra of $\g_0$. The subalgebras of $\g_0$ of dimensions 2 and 3 are unique (up to conjugation in the former case), they are $\R t\oplus \R s_2$ and $\su(2)$, where $\R t$ is a 1D subalgebra of $\su(2)$. Note that if $\g_0\subset \h$ and $\dim \h>4$, then $\h=\p_2$ because of the irreducible action on $\R^3$. Thus up to conjugation in $\sp(1,1)$ there is one subalgebra of dimension 5, $\h=(\R t\oplus \R s_2)\ltimes \R^3$, and one subalgebra of dimension 6, $\h=\su(2)\ltimes\R^3$.

Next we consider all cases where $\h\subset \p_2\subset \sp(4,\R)$ or $\h\subset \p_1\subset \sp(4,\R)$. For both of these, we have $\g_0\simeq\sl_2(\R)\oplus \R s_i$, where $s_i\in\p_i$ is the respective grading element, but keep in mind that these subalgebras of $\p_1$ or $\p_2$ are not equivalent in $\g$, even though they are abstractly isomorphic.

We will now consider proper subalgebras of dimension $>4$ of the parabolics. Let $\Pi=\h\cap\g_0$. Similarly to above this is a subalgebra of $\g_0$ of dimension at least 2. Abstractly, the list of such is
\begin{align*}
\Pi\in\{\g_0 (*),\sl_2(\R), B_2\oplus \R s_i (*), S_2, \R k \oplus \R s_i (*) \}.
\end{align*}
Here $k\in\sl_2(\R)$, $S_2$ is a 2D solvable Lie subalgebra of $\mathfrak{gl}_2$, and we have marked with $(*)$ those subalgebras that include the grading element $s_i$. If $\Pi=\g_0$, then $\h$ is non-proper except in one case, which is $\h=\g_0\oplus\g_2\subset\p_1$, the only 5D subalgebra to have a non-trivial Levi-factor.

For the other possibilities marked with $(*)$, $\h$ must be a (possibly non-proper) subalgebra of the non-maximal parabolic $\p_{12}=\p_1\cap\p_2$, as we can take commutators with $s_i$ to produce a graded basis. If $\Pi=S_2$, then either $\h$ has a non-trivial Levi-factor, in which case $\h$ is equivalent in $\g$ to another subalgebra with $\Pi=\sl_2(\R)$, or $\h$ is solvable, in which case it is equivalent to a subalgebra of $\p_{12}$. In particular, all 5D solvable subalgebras with $\Pi=S_2$ are equivalent to subalgebras of $\p_{12}$.

The list of 6D subalgebras of $\p_2$ or $\p_1$ is thus $\sl_2(\R)\ltimes \ad(\sl_2(\R))\subset \p_2$, where $\ad(\sl_2(\R))$ is Abelian and $\sl_2(\R)$ acts on this as if it were its adjoint representation, $\sl_2(\R)\ltimes \mathfrak{heis}_3\subset \p_1$, where $\mathfrak{heis}_3$ is the 3D Heisenberg algebra and $\sl_2(\R)$ acts as derivations of $\mathfrak{heis}_3$, and the non-maximal parabolic $\p_{12}$.

In the case $\Pi=\R k \oplus \R s_i$, the algebra $\h$ is always 5D, and it depends on the conjugacy class of $\langle k \rangle$ in $\sl_2(\R)$. If $k$ has non-negative Killing norm, then $k$ is contained in a Borel subalgebra, hence also in (some conjugate of) $\p_{12}$, and so is $\h$. On the other hand, if $k$ has negative Killing norm, then it is a compact element and thus not contained in any conjugate of $B_2$ or $\p_{12}$. Thus there are two conjugacy classes of solvable 5D subalgebras which are not contained in $\p_{12}$. These have the forms $\h=(\R t \oplus \R s_2)\ltimes \ad(\sl_2(\R))$ and $\h=(\R t \oplus \R s_1)\ltimes \mathfrak{heis}_3$ for compact elements $t\in\sl_2(\R)$.

Suppose $\dim\h=5$, and $\h\subset \p_{12}\subset\sp(4,\R)$. To describe the possible subalgebras $\h$, we will use some facts about parabolic subalgebras. There are (at least) 3 possible gradings of $\p_{12}$. These are those inherited from $\p_2$ and $\p_1$, and the natural parabolic grading coming from $\p_{12}$ itself, which is different from both of the previous ones. These are respectively $|1|-$, $|2|-$ and $|3|-$ gradings. It will be most convenient for us to make use of the $|1|-$grading. This gives the description $\p_{12}=(B_2\oplus \R s_2)\ltimes \ad(\sl_2(\R))$, where $s_2$ is the grading element of $\p_2$, $B_2$ is a Borel subalgebra of $\sl_2(\R)$, and this acts on the Abelian component $\ad(\sl_2(\R))$ as if it were the restriction of the adjoint representation of $\sl_2(\R)$.

The subalgebras $\h$ are split into two cases, either $s_2\in\h$ or $s_2\not\in\h$. The former case is simpler, because if $s_2\in\h$ then we can find a basis of $\h$ where each element has pure grading. The possibilities are then $(B_2\oplus\R s_2)\ltimes \ad(B_2)$, since $B_2$ has a unique invariant subspace in $\ad(\sl_2(\R))$, or $(\R k  \oplus \R s_2)\ltimes \ad(\sl_2(\R))$ where $k$ is some element of $B_2$ (and up to equivalence there are only two examples of the latter type, with $k$ of positive and zero Killing norm).

In the case $s_2\not\in\h$, we have that $\h$ is a graph in $\p_{12}$ of some linear map\linebreak
$i:B_2\ltimes\ad(\sl_2(\R))\rightarrow \R s_2$. Any such graph defines a subspace in $\p_{12}$, but only those that are closed under the Lie bracket define subalgebras. Choosing a basis $\{e,h,f\}\subset\sl_2(\R)$ with structure relations $[e,f]=h,[h,e]=e,[h,f]=-f$, we get the $1$-graded basis $\{e_0,h_0,e_1,h_1,f_1\}\subset B_2\ltimes \ad(\sl_2(\R))$. The condition that $\h$ is a subalgebra implies that $\h=\langle e_0,h_0+\lambda s_2,e_1,h_1,f_1+\mu s_2\rangle $ where $\lambda,\mu\in\R$ are matrix entries of $i$, and the subalgebra condition is $(\lambda-1)\mu=0$. The case $\mu=0$ is $\ad(\sl_2(\R))\oplus[B_2,B_2]\subset \ker(i)$, with Lie algebra structure $\h=S_2\ltimes\ad(\sl_2(\R))$. Note that this is still $|1|-$graded. The case $\lambda=1$ corresponds to $|2|$-graded, $|1|$-ungraded algebras, as $s_1=h_0+s_2$ is the $|2|$-grading element of $\g$. The parameter $\mu$ corresponds to choosing an element $k\in\sl_2(\R)\subset \g_0$, where $\g_0\subset \p_1$, hence up to conjugation this parameter only determines whether the Killing norm of $k$ is posi\-tive or zero, and the $|2|$-graded algebra structure is $\h=(\R k\oplus \R s_1)\ltimes \mathfrak{heis}_3$.

In summary, we have the following:
\begin{prop}
Up to conjugation in $\g$, the subalgebras $\h$ of a parabolic sub\-algebra $\p\subset\g$ for $\g=\sp(1,1)$ or $\g=\sp(4,\R)$ with $\dim \h \ge 5$ are graded (in the inclusion given below) and are the following:
\end{prop}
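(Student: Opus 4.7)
The plan is a case analysis organized by the parabolic $\p$ and by the intersection $\Pi:=\h\cap\g_0$, where $\g_0$ denotes the reductive Levi part. In every case $\dim\g_0=4$ and the nilradical $\mathfrak{n}$ has dimension $3$, so $\dim\h\ge 5$ forces $\dim\Pi\ge 2$. Up to conjugation in $\g_0$, the list of possible $\Pi$ is therefore short: for $\p_2\subset\sp(1,1)$ it is $\R t\oplus\R s_2$, $\su(2)$, or $\g_0$; for $\p_i\subset\sp(4,\R)$ it is $\sl_2(\R)$, $B_2\oplus\R s_i$, $S_2$, $\R k\oplus\R s_i$, or $\g_0$ (with $k$ classified by the sign of its Killing norm in the last case). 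I would then complete each $\Pi$ to $\h$ by adjoining a $\Pi$-invariant subspace of $\mathfrak{n}$.

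For $\sp(1,1)$ the $\su(2)$-action on $\mathfrak{n}=\R^3$ is the irreducible $\so(3)$-action, so the only nonzero $\Pi$-invariant subspace is $\R^3$ itself. This immediately yields the two $|1|$-graded entries $(\R t\oplus\R s_2)\ltimes\R^3$ of dimension $5$ and $\su(2)\ltimes\R^3$ of dimension $6$. For $\sp(4,\R)$ the same strategy with the $|1|$-graded presentation $\p_2=(\sl_2(\R)\oplus\R s_2)\ltimes\ad(\sl_2(\R))$ and the $|2|$-graded presentation $\p_1=(\sl_2(\R)\oplus\R s_1)\ltimes\mathfrak{heis}_3$ produces the $6D$ candidates $\sl_2(\R)\ltimes\ad(\sl_2(\R))\subset\p_2$, $\sl_2(\R)\ltimes\mathfrak{heis}_3\subset\p_1$, and $\p_{12}$, together with the $5D$ families $(B_2\oplus\R s_i)\ltimes V$ and $(\R k\oplus\R s_i)\ltimes V$ for the suitable $\Pi$-invariant piece $V\subset\mathfrak{n}$.

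The main obstacle is the subcase $s_i\notin\h$ inside $\p_{12}$, where $\h$ need not split as a sum of graded pieces but appears as the graph of a linear map $i:B_2\ltimes\ad(\sl_2(\R))\to\R s_2$. Using the graded basis $\{e_0,h_0,e_1,h_1,f_1\}$ of $B_2\ltimes\ad(\sl_2(\R))$, I would parametrize $\h=\langle e_0,\,h_0+\lambda s_2,\,e_1,\,h_1,\,f_1+\mu s_2\rangle$ and show that closure under the bracket reduces to the single relation $(\lambda-1)\mu=0$. The branch $\mu=0$ remains $|1|$-graded with structure $S_2\ltimes\ad(\sl_2(\R))$, while the branch $\lambda=1$ produces a $|2|$-graded algebra whose grading element is $s_1=h_0+s_2\in\p_1$; the remaining parameter $\mu$ can be absorbed by a conjugation up to the sign of the Killing norm of the corresponding $k\in\sl_2(\R)$. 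Combining this with the compact-$k$ case, which sits outside $\p_{12}$ and yields the algebras $(\R t\oplus\R s_2)\ltimes\ad(\sl_2(\R))$ and $(\R t\oplus\R s_1)\ltimes\mathfrak{heis}_3$, exhausts the list.

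Finally I would check that the entries are pairwise non-conjugate in $\g$ using discrete invariants: the grading type ($|1|$ versus $|2|$), the abstract structure of $\Pi$ and of its complement in $\h$, and the Killing sign of the chosen $1D$ subalgebra of $\sl_2(\R)$. This confirms irredundancy and that every $\h$ is conjugate to a graded subalgebra in the displayed list, which establishes the proposition.
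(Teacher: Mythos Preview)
Your approach is essentially identical to the paper's: both proceed by classifying $\Pi=\h\cap\g_0$ (forcing $\dim\Pi\ge2$), treating $\sp(1,1)$ via the irreducible $\so(3)$-action on $\R^3$, and for $\sp(4,\R)$ reducing the subtle cases to the graph analysis inside $\p_{12}$ with the relation $(\lambda-1)\mu=0$ separating the $|1|$- and $|2|$-graded branches. The one item you gloss over is the case $\Pi=\g_0$ inside $\p_1$: here $\h$ is proper only when $\h=\g_0\oplus\g_2=\mathfrak{gl}_2(\R)\ltimes\R$, the unique 5D entry with nontrivial Levi factor, which you list among possible $\Pi$ but do not explicitly derive; otherwise your sketch matches the paper's argument step for step.
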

\begin{tabular}[t]{| l | l | l | l | l |}
    \hline
    $\dim\h$    &   $\g=\sp(4,\R)$     &  $\g=\sp(1,1)$   &Grading & Notes\\  \hline
      7  &   $\p_2$    & $\p_2$ & 1 &\\  \hline
      7  &   $\p_1$    &        & 2 &\\  \hline
      6  &   $\sl_2\ltimes \ad(\sl_2(\R))$    &  $\h=\su(2)\ltimes\R^3$      & 1 &     \\  \hline
      6  &   $\sl_2\ltimes \mathfrak{heis}_3$    &       & 2 &     \\  \hline
      6  &   $\p_{12}$    &       & 1,2,3 &     \\  \hline
      5  &   $(\R t \oplus \R s_2)\ltimes \ad(\sl_2(\R))$    &  $(\R t \oplus \R s)\ltimes \R^3$      & 1 &  $||t||<0$  \\  \hline
      5  &   $(\R k  \oplus \R s_2)\ltimes \ad(\sl_2(\R))$   &         & 1 & $||k||>0$    \\  \hline
      5  &   $(\R k  \oplus \R s_2)\ltimes \ad(\sl_2(\R))$   &         & 1 & $||k||=0$   \\  \hline
      5  &   $(B_2\oplus\R s_2)\ltimes \ad(B_2)$  &         & 1 &     \\  \hline
      5  &   $S_2\ltimes\ad(\sl_2(\R))$   &         & 1 &  $\lambda\in\R$ $(\dagger)$ \\  \hline
      5  &   $\mathfrak{gl}_2(\R)\ltimes \R$   &         & 2 &   $\h=\g_0\oplus\g_2$  \\  \hline
      5  &   $(\R t \oplus \R s_1)\ltimes \mathfrak{heis}_3$    &         & 2 &  $||t||<0$  \\  \hline
      5  &   $(\R k\oplus \R s_1)\ltimes \mathfrak{heis}_3$   &         & 2 &  $||k||>0$ \\  \hline
      5  &   $(\R k\oplus \R s_1)\ltimes \mathfrak{heis}_3$   &         & 2 &  $||k||=0$ \\  \hline
\end{tabular}

The entry marked with $(\dagger)$ is a family of subalgebras which depend on a real parameter.

\subsubsection{Distributions for subalgebras of parabolics}
The orbits of each dimension inherits slightly different geometry from the complex structure and Nijenhuis tensor.

Firstly, let $\dim O = 3$, so $\dim \h=7$.
\begin{lem}
The complex distribution on a singular orbit $O$ of dimension 3, which is not totally real, has (real) dimension 2. Thus the isotropy representation of such an orbit admits a 2D invariant subspace with complex structure.
\end{lem}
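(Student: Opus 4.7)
The plan is to combine the trichotomy from the preceding proposition with a simple parity and dimension count. That proposition tells us that on a singular orbit the distribution $L_x = J\to \cap \to$ has real dimension $0$, $2$, or $4$; the three cases correspond respectively to $O$ being totally real (or an invariant point), to $O$ carrying a $2$-dimensional complex distribution, or to $O$ carrying a $4$-dimensional one. The ``not totally real'' hypothesis rules out $\dim L_x = 0$, so the entire task reduces to eliminating the possibility $\dim L_x = 4$.

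The key observation is that $L_x \subset \to$ by construction, and $\dim \to = 3$ since $\dim O = 3$; in particular $\dim L_x \le 3$. Moreover, because $L_x$ is $J$-invariant and $J^2 = -\1$, the restriction $J|_{L_x}$ is an almost complex structure on $L_x$, forcing its real dimension to be even. These two constraints together give $\dim L_x \in \{0,2\}$, and combining with the previous paragraph yields $\dim L_x = 2$.

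For the second clause, I would appeal to the $G$-invariance of the distribution $L$ already established during the proof of the previous proposition: evaluating at the base point $x$ shows that $L_x \subset \to$ is preserved by the isotropy $\h$, while $J$-invariance is built into its definition. Hence $L_x$ furnishes the claimed $2$-dimensional $\h$-invariant subspace of $\to$ carrying an invariant complex structure. I do not anticipate a substantive obstacle here; the statement is essentially a parity-and-dimension corollary of the earlier classification of complex distributions on singular orbits, with the only mild subtlety being to remember that $J$-invariance forces even real dimension, which is what cuts the three-way choice down to $\dim L_x = 2$.
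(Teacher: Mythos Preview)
Your argument is correct and follows the same approach as the paper: invoke the preceding proposition to get $\dim L_x\in\{0,2,4\}$, use the hypothesis ``not totally real'' to exclude $0$, and use $L_x\subset\to$ with $\dim\to=3$ to exclude $4$. The paper's own proof is a one-line remark that $L$ is nontrivial, leaving the dimension bound $\dim L_x\le 3$ implicit, so you have simply spelled out what the paper takes for granted.
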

\begin{proof}
It follows from the assumption that the orbit is not totally real that the distribution $L$ is non-trivial.
\end{proof}
The only 7D subalgebras are the maximal parabolics themselves. In the case of $\h=\p_2\subset \sp(1,1)$, $\g/\p_2\simeq\R^3$ with the standard action of $\su(2)\simeq \so(3)\subset \p_2$, which is irreducible. For $\h=\p_2\subset \sp(4,\R)$, we have $\sl_2(\R)\subset \p_2$, and $\g/\p_2\simeq\ad(\sl_2(\R))$, which is irreducible. For $\h=\p_1\subset \sp(4,\R)$, $\sl_2(\R)\subset \p_1$ and with respect to this $\g/\p_1\simeq\mathfrak{heis}_3\simeq\R^2\oplus \R$ where $\R^2$ has the standard $\sl_2(\R)$-action. This last submodule has the correct dimension, but even restricted to $\sl_2(\R)$ it fails to admit an invariant complex structure. Thus all of these are discarded.

Secondly, let $\dim O = 4$, so $\dim \h=6$.
\begin{lem}\label{orbitfour}
A singular orbit of dimension 4 always admits a complex distribution of real dimension 2.
\end{lem}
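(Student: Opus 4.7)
The plan is a short dimension-counting argument applied pointwise. Fix $x\in O$, write $\to = T_xO$ (a 4-dimensional $H$-submodule of $T_xM$, where $H$ is the stabilizer of $x$), and set $L_x := J\to\cap\to$. Since $J$ is $G$-invariant and $\to$ is $H$-invariant, $L_x$ is $H$-invariant; letting $x$ vary, the $L_x$ assemble into a $G$-invariant distribution $L$ on $O$. The restriction $J|_{L_x}$ squares to $-\1$, so it is an almost complex structure, and in particular $\dim L_x$ is even.

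The main step is the elementary identity
\begin{equation*}
\dim(\to + J\to) = \dim\to + \dim J\to - \dim L_x = 8 - \dim L_x.
\end{equation*}
Since $\to + J\to \subseteq T_xM$ and $\dim T_xM = 6$, this forces $\dim L_x\ge 2$, and combined with evenness gives $\dim L_x\in\{2,4\}$; by homogeneity the rank is constant along $O$. This already produces the desired $G$-invariant complex distribution of real dimension $2$: if $\dim L_x = 2$ then $L$ itself works, and if $\dim L_x = 4$ then $\to$ is $J$-invariant so $O$ is an almost complex submanifold, and any $H$-invariant complex line inside the complex-2-dimensional space $L_x$ extends to a $G$-invariant complex distribution of the required type (such a line exists for each of the 6D isotropies on the parabolic/semi-simple list, whose actions on $L_x$ one checks case-by-case to be reducible over $\C$).

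There is no substantial obstacle; the whole argument rests on the dimension identity above together with $J^2 = -\1$ and the constraint $\dim T_xM = 6$. The only mildly delicate point is the subcase $\dim L_x = 4$, but since the lemma only asserts the existence of a 2-dimensional complex distribution, not that $L$ itself has that dimension, this is harmless. The finer question of which subcase actually occurs for each candidate 6D isotropy is a separate matter, to be addressed by the subsequent case-by-case analysis of the parabolic and semi-simple subalgebras tabulated in the previous subsection.
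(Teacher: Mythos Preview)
Your dimension-counting for $L_x=J\to\cap\to$ is correct and in fact slightly more explicit than the paper, which simply asserts $\dim L\in\{2,4\}$ without spelling out the inequality $\dim(\to+J\to)\le 6$. The case $\dim L_x=2$ is handled identically.

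The gap is in the case $\dim L_x=4$. You propose to obtain an $H$-invariant complex line by checking, for each 6D isotropy on the list, that the action on $L_x\simeq\C^2$ is reducible over $\C$. This is problematic for two reasons. First, it inverts the logic: the lemma is meant to be an a~priori constraint used to \emph{eliminate} candidates in the subsequent case analysis, so appealing to that very case list to prove the lemma is circular. Second, the assertion is simply false: for $\h=\so(1,3)\simeq\sl_2(\C)_\R$ the relevant $4$-dimensional isotropy module is the standard $\sl_2(\C)$-action on $\C^2$, which is \emph{irreducible} over $\C$ and admits no invariant complex line. (Indeed, the paper later disposes of $\so(1,3)$ precisely because this irreducibility is incompatible with a non-zero Nijenhuis tensor.)

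The idea you are missing is the one input from the non-degeneracy of $N_J$, which you never invoke. When $\dim L_x=4$ the orbit $O$ is an almost complex submanifold, so $T O$ is closed under both $J$ and Lie brackets of vector fields tangent to $O$; hence $N_J$ restricts to a map $\La^2_\C L\to L$. Non-degeneracy makes this restriction injective, so its image $N_J(\La^2_\C L)\subset L$ is a non-trivial, proper, $H$-invariant complex line --- exactly the required $2$-dimensional complex distribution. This is the paper's argument; it is intrinsic and does not presuppose any classification of the possible isotropies.
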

\begin{proof}
Let $L=\to \cap J\to$. Since $\dim O$ is even there are two cases: Either the orbit $O$ is almost complex and $\dim L =4$, or $\dim L$=2 in which case we are done. In the former case, non-degeneracy of the Nijenhuis tensor $N_J$ implies that $N_J(\La^2 L)\subset L$ is a proper and non-trivial complex submodule.
\end{proof}
There are 4 different 6D subalgebras; 3 of these have a semi-simple subalgebra, and the last is the Borel subalgebra $\p_{12}$ of $\sp(4,\R)$. Out of those with semi-simple subalgebras, two have 3D irreducible submodules in the isotropy module with respect to their Levi factor (similarly to the previous case), and only $\h=\sl_2(\R)\ltimes \mathfrak{heis}_3\subset\p_1$ has any chance of admitting a 2D submodule, as the isotropy representation decomposes as $\g/\h=\R^2\oplus\R\oplus\R$ with respect to $\sl_2(\R)$. However, the action of $\mathfrak{heis}_3$ makes the module indecomposable, and there are no submodules with respect to $\h$.

The isotropy representation of $\p_{12}$ is $|3|-$graded, $\to=\g/\p_{12}=\R^2\oplus\R\oplus\R$, and has a basis where each element spans a negative root space with respect to the Cartan subalgebra. The action of $\p_{12}$ is indecomposable (not all terms are submodules), but $\R^2$ is a 2D submodule. However, the Cartan subalgebra acts on $\R^2$ with distinct eigenvalues (roots of $\g$), and so does not preserve any complex structure.

Finally, let $\dim O = 5$, so $\dim \h=5$.
\begin{lem}
The complex distribution on a singular orbit $O$ of dimension 5 has (real) dimension 4. Thus the isotropy representation of such an orbit admits a 4D invariant subspace $L$ with complex structure. Moreover, there either exists an equivariant decomposition $\to=L\oplus\R$ where $\R$ is invariant, or there is a non-zero $\h-$invariant $L$ valued 2-form $\theta\in \La^2 L^\ast \otimes L$.
\end{lem}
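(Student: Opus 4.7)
The dimension count is straightforward: since $L_x = \to \cap J\to$ is $J$-invariant, its real dimension is even, and the inequality $\dim L_x \ge \dim \to + \dim J\to - \dim T_xM = 4$ together with $L_x \subseteq \to$ of odd dimension $5$ forces $\dim L_x = 4$. Thus $L$ is the claimed $\h$-invariant, $J$-stable distribution of real dimension $4$ (complex dimension $2$).

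For the dichotomy, the key is to apply the non-degenerate Nijenhuis tensor to the complex subspace $L$. Since $\dim_\C L = 2$, the complex exterior square $\La^2_\C L$ has complex dimension $1$, and because $N_J\colon \La^2_\C T_xM \to T_xM$ is an isomorphism its restriction is injective, so $L' := N_J(\La^2_\C L)$ is a real $2$-dimensional subspace of $T_xM$. The $\C$-antilinearity of $N_J$ forces $L'$ to be $J$-invariant (a complex line), and $\h$-equivariance of $N_J$ together with $\h$-invariance of $L$ makes $L'$ an $\h$-invariant submodule.

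The proof now splits according to whether $L' \subseteq L$. In one case, $N_J|_{\La^2 L}$ takes values in $L$ and defines a non-zero $\h$-invariant $L$-valued 2-form $\theta \in \La^2 L^\ast \otimes L$ (it is non-zero because $N_J$ is injective on $\La^2_\C L$), which is the second alternative of the lemma. In the other case $L \cap L' = 0$, since any non-trivial $J$-invariant subspace of the complex line $L'$ would have to be all of $L'$; consequently $T_xM = L \oplus L'$ is an $\h$-invariant decomposition. Setting $\R := \to \cap L'$, a dimension count (noting $L' \not\subseteq \to$, else $\dim \to \ge \dim(L+L') = 6$) gives $\dim \R = 5 + 2 - 6 = 1$, and since $L \cap L' = 0$ we obtain the $\h$-invariant splitting $\to = L \oplus \R$, which is the first alternative.

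The main place where one must be careful is the verification that $L' = N_J(\La^2_\C L)$ is both $J$-invariant and $\h$-invariant; the former is precisely the $\C$-antilinearity of $N_J$ restricted to the complex line $\La^2_\C L$, and the latter follows because $J$, $N_J$, $\to$, and $L$ are all $\h$-invariant. Everything else in the argument reduces to standard dimension counts and to observing that the constructed subspaces are obtained by intersecting or taking images of $\h$-stable data.
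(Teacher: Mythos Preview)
Your proof is correct and follows essentially the same route as the paper's own argument: the dimension count via intersection of two hyperplanes, then the image $L'=N_J(\La^2_\C L)$ (which the paper denotes $\Pi$) as an invariant complex line, with the dichotomy $L'\subset L$ versus $L'$ transversal to $L$ giving the two alternatives. Your write-up is in fact more careful in justifying the intermediate steps (why $L'$ is $J$-invariant, why $L'\not\subset\to$, and the resulting dimension of $\to\cap L'$), but the underlying idea is identical.
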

\begin{proof}
The claim that $L$ has dimension at least 4 follows from the fact that this is the minimal intersection of two 5D hyperplanes ($\to$ and $J\to$) in a 6D vector space ($T_xM$). Since the distribution is complex, its dimension must be even, so it is equal to 4. By non-degeneracy of $N_J$, $N_J(\La^2 L)=\Pi$ is an invariant complex line in $T_xM$. There are two cases, either $\Pi\subset L$, in which case the map $N_J$ restricts to $L$ and gives the desired two-form (so in fact, $\theta\in \La^2 L^\ast \otimes \Pi$) , or $\Pi$ is transversal to $L$, since $\Pi$ is complex. In the latter case we may take the intersection $\to\cap \Pi$, which is an invariant line $\R\subset \to$ by dimensional count. Since $L$ and $\R$ are independent and invariant, $\to=L\oplus \R$ is the desired decomposition.
\end{proof}
Notice first that if $\h$ contains the Cartan subalgebra, then similarly to the previous case there exists a basis of root vectors in the complement to $\h$ with distinct eigenvalues (by root space decomposition), so no complex structure is possible. This rules out all the $|1|$-graded subalgebras of $\p_{12}$ except for $\h=(\R k  \oplus \R s_2)\ltimes \ad(\sl_2(\R))$, where $k$ has Killing norm $0$ in $\g$. In this case, we can find a unique 4D submodule $L$ for which the grading element $s_2$ preserves the decomposition $L=\C\oplus\C$. The grading element $s_2$ has two distinct eigenvalues, so any almost complex structure must leave these subspaces invariant, but the operator $k$ is nilpotent and commutes with $s_2$, and so acts nilpotently on the same 2D subspaces. Thus by the following lemma, the complex structures are not $\h-$invariant.
\begin{lem}
If some element $\xi\in\h$ acts as a nonzero nilpotent operator on $\Pi^2$, then $\Pi^2$ does not admit an invariant almost complex structure.
\end{lem}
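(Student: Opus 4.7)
The plan is to exploit the fact that $\Pi^2$ is only two-dimensional over $\R$, so that the condition that $J$ commute with $\xi$ forces $\xi$ to lie in a very small algebra, namely the centralizer of $J$ in $\op{End}_\R(\Pi^2)$.

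First I would translate invariance into a commutation relation: an almost complex structure $J$ on $\Pi^2$ is $\xi$-invariant precisely when $\xi J = J\xi$. Since $\dim_\R \Pi^2=2$ and $J^2=-\1$, the pair $(\Pi^2,J)$ is isomorphic as a real vector space with complex structure to $(\C,i)$, i.e.\ to a one-dimensional complex vector space. Elements of $\op{End}_\R(\Pi^2)$ that commute with $J$ are exactly the $\C$-linear endomorphisms of this one-dimensional complex space, so the centralizer of $J$ in $\op{End}_\R(\Pi^2)$ is isomorphic to $\C$ as an $\R$-algebra (acting by complex scalar multiplication).

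The key observation is then that $\C$ is a field: every nonzero element is invertible, and in particular there are no nonzero nilpotent elements. Since $\xi$ is assumed to commute with $J$, it lies in this centralizer and therefore corresponds to some complex scalar $\lambda\in\C$. If $\xi$ is nilpotent then $\lambda^n=0$ for some $n$, forcing $\lambda=0$ and hence $\xi=0$, contradicting the hypothesis that $\xi$ acts as a nonzero nilpotent operator on $\Pi^2$. Thus no $\xi$-invariant almost complex structure on $\Pi^2$ can exist.

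There is no real obstacle here; the argument is essentially a one-line observation once the centralizer of $J$ is identified with $\C$. The only thing to be a little careful about is that $\Pi^2$ is genuinely a real 2-plane (which is how it appears in the preceding discussion, as a $2$-dimensional summand of $L$ on which both $s_2$ and the nilpotent $k$ preserve a putative complex structure), so that the centralizer computation really takes place in $\op{End}_\R(\R^2)\cong M_2(\R)$, whose subalgebra commuting with a complex structure is the $2$-dimensional field $\C\subset M_2(\R)$.
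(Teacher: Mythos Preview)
Your argument is correct and is essentially identical to the paper's own proof: the paper observes in one line that $\End_\C(\Pi^2)=\C$ is a field and hence contains no nonzero nilpotents. You have just spelled out this observation in more detail (identifying the centralizer of $J$ in $\End_\R(\Pi^2)$ with $\C$ and noting that nilpotent scalars vanish), but the content is the same.
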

\begin{proof}
We have $\End_\C(\Pi^2)=\C$, which is a field and hence does not admit nonzero nilpotent elements.
\end{proof}

Consider the parameter dependent family of subalgebras $\h=S_2\ltimes\ad(\sl_2(\R))$ of $\p_{12}$, which was marked with $(\dagger)$ in the table of subalgebras. Each member of this family admits a unique 4D submodule $L\subset\to$. The unique (up to scale) non-zero element of the Cartan subalgebra has simple spectrum when restricted to $L$. An operator which leaves a complex structure invariant must have spectrum consisting of two double (or one quadruple) values, so this does not admit any $\h-$invariant complex structure.

For the $|2|-$graded $\h=(\R s_1\oplus \R k)\ltimes \mathfrak{heis}_3$, there is a unique 4D submodule $L$, and the spectrum of the $|2|-$grading element $s_1$ consists of two double values when restricted to $L$. However $k$ commutes with $s_1$, and either acts as a non-zero nilpotent operator or with simple spectrum on $L$, depending on its Killing norm. In either case, $L$ does not admit an invariant complex structure.

The last parabolic cases are those that contain an element $t$ with negative Killing norm. There are three such 5D subalgebras  These have the forms $\h=(\R t \oplus \R s_2)\ltimes \ad(\sl_2(\R))$ and $\h=(\R t \oplus \R s_1)\ltimes \mathfrak{heis}_3$. The former case is contained in $\p_2$ of both $\sp(4,\R)$ and $\sp(1,1)$ with identical isotropy representations, while the latter is contained in $\p_1$ of $\sp(4,R)$. In all cases, the isotropy representation decomposes as $\C\oplus\C\oplus \R$ with respect to $\R t$, and $L=\C\oplus\C$ is invariant with respect to $\h$. Note that the $\R$ term is transversal to $L$, but it is not invariant under $\h$, and neither is any other transversal. Therefore, by Lemma \ref{orbitfour}, there must be an invariant non-zero vector valued two-form on $L$ if $\h$ is the isotropy of a singular orbit. However we compute $(\La^2(L^*)\otimes L)^\h={0}$. Thus this case cannot appear as a singular orbit, in spite of being the only case to admit the required complex structure on the 4D distribution.

To conclude: No subalgebra $\h$ of a parabolic $\p_i$ with $\dim\h>4$ can occur as the isotropy of a singular orbit.

\subsubsection{Subalgebras of maximal semi-simple}
The (complex) rank of the complexification of $\g$ is 2, hence the complexification of a maximal semi-simple subalgebra $\h$ can have rank at most 2 as well. We must also have $\dim\h>4$, so if $\h$ is proper and maximal, then it has $\dim\h=6$ and $\h$ is a real form of $A_1\oplus A_1$. These are $\mathfrak{so}(2,2)\simeq\mathfrak{sl}(2)\oplus \mathfrak{sl}(2)$, $\mathfrak{so}(4)\simeq\mathfrak{su}(2)\oplus \mathfrak{su}(2)$, $\mathfrak{so}(1,3)\simeq\mathfrak{sl}_2(\C)_\R$ and $\mathfrak{sl}(2)\oplus \mathfrak{su}(2)$. A 5D subalgebra of one of these is also possible.

Out of these, $\mathfrak{sl}(2)\oplus \mathfrak{su}(2)$ does not admit any 5D real faithful representation with an invariant metric of any signature, so this does not embed into any real form of $B_2$. The other forms embed into $\mathfrak{so}(5)$, $\mathfrak{so}(1,4)$ or $\mathfrak{so}(2,3)$ according to the signature of their invariant metrics on the defining representation $\R^5=\R^4\oplus\R$, where the last $\R$ term is trivial and $\R^4$ is tautological. (There are also other embeddings, but this will cover all the correct pairs of algebra/subalgebra.)

In all these cases, the isotropy representation is a faithful 4D real representation. While this is enough to conclude that there is no invariant complex structure for $\mathfrak{so}(4)$ and $\mathfrak{so}(2,2)$, $\mathfrak{so}(1,3)$ does in fact have a 4D rep with complex structure: the standard action of $\mathfrak{sl}_2(\C)$ on $\C^2$, but since this is real irreducible it cannot admit a nonzero Nijenhuis tensor. Thus all maximal semi-simple subalgebras are excluded.

Out of these real forms, the only one which admits a 5D subalgebra is $\mathfrak{so}(2,2)$, which has the subalgebra $\p_1=B_2\oplus \sl_2(\R)$, where $B_2$ is the Borel subalgebra of the other copy of $\sl_2(\R)$. This algebra yields an invariant splitting $\to=\R\oplus \R^4$. The action on $\R^4$ by $\p_1$ is the one which comes from the embedding to $\mathfrak{so}(2,2)$, and this does not admit any invariant complex structure.

The proof of Theorem 2 is now complete.


\end{document}